\documentclass[a4paper,11pt]{article}
\usepackage[T1]{fontenc} 
\usepackage{amsmath,amsthm}
\usepackage[francais,english]{babel}
\usepackage[dvips]{graphicx}
\usepackage{todonotes}
\usepackage{color}
\usepackage{tikz}
\usepackage{tikz-cd}
\usetikzlibrary{arrows,calc,matrix,fit}
\usepgflibrary{arrows}
\usepackage{soul}
\usepackage{amsfonts,amssymb}
\usepackage{geometry}
\geometry{a4paper,hmargin=1in, vmargin=1in}
\usepackage{hyperref}
\usepackage{stmaryrd}
\usepackage{fancyhdr}
\usepackage{url}
\usepackage{todonotes}
\usepackage{dsfont}
 \usepackage[utf8]{inputenc} 

\theoremstyle{definition}

\newtheorem*{TV2}{Theorem}

\newtheorem*{thm*}{Theorem}

\newtheorem{prop}{Proposition}[section]

\newtheorem{lemma}[prop]{Lemma}
\newtheorem*{lethallemma}{{Theorem \ref{lethal lemma}}}
\newtheorem{thm}[prop]{Theorem}
\newtheorem{definition}[prop]{Definition}
\newtheorem{corollary}[prop]{Corollary}

\newtheoremstyle{pourlesremarques}{\topsep}{\topsep}{\normalfont}{}{\bfseries}{.}{ }{}
\theoremstyle{pourlesremarques}
\newtheorem{rem}[prop]{Remark}
\newtheorem*{rem*}{Remark}
\newtheoremstyle{pourlesexemples}{\topsep}{\topsep}{\normalfont}{}{\bfseries}{.}{ }{}
\theoremstyle{pourlesexemples}

\def\presuper#1#2%
  {\mathop{}%
   \mathopen{\vphantom{#2}}^{#1}%
   \kern-\scriptspace%
   #2}

\newcommand{\St}{{\operatorname{St}}}
\newcommand{\cusp}{{\operatorname{cusp}}}
\newcommand{\cent}{{\operatorname{cent}}}
\newcommand{\whitt}{{\operatorname{Whitt}}}
\newcommand{\stand}{{\operatorname{stand}}}

\newcommand{\disc}{{\operatorname{disc}}}

\newcommand{\gen}{{\operatorname{gen}}}

\newcommand{\J}{\mathrm{J}}
\newcommand{\Irr}{\Pi}
\newcommand{\rep}{\mathfrak{R}}
\newcommand{\Sm}{\mathfrak{S}}

\newcommand{\Ext}{\mathrm{Ext}}

\newcommand{\Ind}{\operatorname{Ind}}

\newcommand{\Hom}{\operatorname{Hom}}
\renewcommand{\subset}{\subseteq}

\newcommand{\diag}{\operatorname{diag}}

\newcommand{\GL}{\operatorname{GL}}
\newcommand{\Sp}{\operatorname{Sp}}

\newcommand{\abs}[1]{\left|{#1}\right|}

\newcommand{\Res}{\operatorname{Res}}

\newcommand{\p}{{\mathfrak{p}}}   

\renewcommand{\o}{\mathfrak{o}}

\renewcommand{\d}{\delta}

\renewcommand{\l}{\lambda}

\newcommand{\C}{\mathbb{C}}

\newcommand{\N}{\mathbb{N}}

\renewcommand{\S}{\mathcal{S}}

\newcommand{\1}{\mathbf{1}}

\def\ess{\operatorname{ess}}
\def\GL{\operatorname{GL}}

\def\\Hom{\operatorname{\Hom}}
\def\Irr{\operatorname{Irr}}

\def\St{\operatorname{St}}

\def\diag{\operatorname{diag}}

\def\Res{\operatorname{Res}}

\newcommand{\D}{\Delta}
\renewcommand\Re{\mathrm{Re}}

\def\presuper#1#2%
  {\mathop{}%
   \mathopen{\vphantom{#2}}^{#1}%
   \kern-\scriptspace%
   #2}
\setlength\parindent{0pt}
\setlength\parskip{10pt}

\title{Extension of Whittaker functions and test vectors}
\author{R. Kurinczuk\footnote{Robert Kurinczuk, Department of Mathematics, Imperial College London, SW7 2AZ. U.K. \newline Email:~robkurinczuk@gmail.com, Tel: +44(0)7921 221967}, N. Matringe\footnote{Nadir Matringe, Universit\'e de Poitiers, Laboratoire de Math\'ematiques et Applications,
T\'el\'eport 2 - BP 30179, Boulevard Marie et Pierre Curie, 86962, Futuroscope Chasseneuil Cedex. France. \newline Email:~Nadir.Matringe@math.univ-poitiers.fr}}
\date{}
\begin{document}
\maketitle

\begin{abstract}
 We show that certain products of Whittaker functions and Schwartz functions on a general linear group extend to Whittaker functions on a larger general linear group.  This generalizes results of Cogdell--Piatetski-Shapiro \cite{CPS} and Jacquet--Piatetski-Shapiro--Shalika \cite{JPSS83}. As a consequence, we prove that the Rankin--Selberg $L$-factor of the product of a discrete series representation and the Zelevinsky dual of a discrete series representation is given by a single Rankin--Selberg integral.
\end{abstract}

{\bf Keywords: } Automorphic $L$-functions, Rankin--Selberg method, Whittaker models, representations of $p$-adic groups~\newline
{\bf MSC classification: }11F70, 11F66

\section{Introduction}
In their seminal work, Jacquet--Piatetski-Shapiro--Shalika developed the Rankin--Selberg method for automorphic representations, treating the local theory in \cite{JPSS83}. 
The local Euler factors, or \emph{$L$-factors}, are defined as greatest common divisors of families of local \emph{Rankin--Selberg} integrals.  
As a consequence of the definition, each $L$-factor can be written as a finite sum of Rankin--Selberg integrals, but it is not clear whether one can 
find \emph{test vectors} expressing the $L$-factor as a single Rankin--Selberg integral.  When the local representations are cuspidal, this is possible by an explicit 
computation~\cite{KM17}.  

In this article, we prove a local result on the extension of Whittaker and Schwartz functions (stated precisely at the end of this introduction).  We show that 
this result simultaneously generalizes results of \cite{CPS} and \cite{JPSS83}, both of which have proved useful in the theory of integral representations of $L$-factors.   
As a consequence of our result we answer the test vector question in the affirmative for the $L$-factor of the 
product of a discrete series representation and the Zelevinsky dual of a discrete series representation.  

Let $F$ be a locally compact non-archimedean local field, $\psi$ be a nontrivial character of $F$, and put $G_n=\GL_n(F)$. Let $\pi$ and $\pi'$ be irreducible smooth 
representations of $G_n$ and $G_m$ with $n\geqslant m$, and let $\S(\pi)$ and $\S(\pi')$ be the standard modules whose irreducible quotients are $\pi$ and $\pi'$ respectively.
If $n>m$, the $L$-factor $L(s,\pi,\pi')$ attached to the pair $(\pi,\pi')$ by Jacquet--Piatetski-Shapiro--Shalika \cite{JPSS83} is a finite sum $\sum_i I(s,W_i,W'_i)$ of
Rankin-Selberg integrals for Whittaker functions $W_i\in W(\S(\pi),\psi)$ and $W'_i\in W(\S(\pi'),\psi^{-1})$ in the Whittaker models of the standard modules. 
If $n=m$, it is a finite sum $\sum_i I(s,W_i,W'_i,\phi_i)$ of Rankin--Selberg integrals with $W_i,W'_i$ as before and $\phi_i\in \mathcal{C}_c^\infty(F^n)$ a Schwartz function.  %
A tuple $(W,W')$ or $(W,W',\phi)$ whose Rankin--Selberg integral equals the $L$-factor is called a \emph{test vector}. 
The systematic study of test vectors for local Rankin--Selberg integrals was initiated in the doctoral thesis \cite{KM10}, where many interesting partial results were obtained.

Let $\delta$ and $\delta'$ be discrete series representations of $G_n$ and $G_m$ respectively, and $\presuper{t}\delta'$ denote the Zelevinsky dual of $\delta'$. 
In this article we prove the following test vector result:

\begin{TV2}[Remark \ref{remLtriv}, Proposition \ref{one cusp} and Theorem \ref{TV2}]
There exist $W\in W(\delta,\psi)$ and $W'\in W(\S(\presuper{t}\delta'),\psi^{-1})$, such that, 
\begin{enumerate}
\item If $n>m$
\[L(s,\delta,\presuper{t}\delta')=I(s,W,W');\] 
\item If $n=m$,  in addition there exists a Schwartz function $\phi$ on $F^n$, such that 
\[L(s,\delta,\presuper{t}\delta')=I(s,W,W',\phi).\] 
Moreover one can always choose $\phi=\mathbf{1}_{(\p^f)^{n-1} \times (1+\p^f)}$ for $f$ large enough.\end{enumerate}
\end{TV2}

In fact,~$L(s,\d,\presuper{t}\d')=L(s,\d,\d')$, so this also shows that the Euler factor $L(s,\d,\d')$ is given by a single integral $I(s,W,W')$ or $I(s,W,W',\phi)$; however with~$W'$ inside~$W(\S(\presuper{t}\d'),\psi^{-1})$ rather than inside $W(\d',\psi^{-1})$.  Indeed, in this case, this makes the test vector question simpler
 %
 %
 as~$W(\S(\presuper{t}\d'),\psi^{-1})$ contains~$W(\d',\psi^{-1})$ as a proper subspace. An easy, yet already instructive, example is given in Section \ref{steinberg case}, where we take $\d$ and $\d'$ to be Steinberg representations.  In the general case, we do not address the question of finding explicit test vectors, which would require other techniques, for example Bushnell--Kutzko type theory in the spirit of \cite{KM17,PS08}.  The techniques of this paper are entirely different to \cite{KM17}; here we use Bernstein and Zelevinsky's theory of derivatives, in particular Cogdell--Piatetski-Shapiro's interpretation of derivatives \cite{CPS}, to reduce to the cuspidal case.

To obtain our test vector result, the key step is Theorem \ref{lethal lemma}, which generalizes both \cite[Proposition 9.1]{JPSS83} and part of~\cite[Proposition 1.7]{CPS}.  To state Theorem \ref{lethal lemma}, first we explain a consequence of the results of \cite{CPS} which we recall and expand in Sections \ref{CPS interpretation} and \ref{extension of Whittaker}. Denote by $P_n$ the mirabolic subgroup of $G_n$ consisting of matrices with final 
row $\eta_n=\begin{pmatrix}0&\cdots&0&1\end{pmatrix}$, and by $N_n$ its standard maximal unipotent subgroup. If $\tau$ is a submodule of $\Ind_{N_n}^{P_n}(\psi)$ 
such that the $k$-th Bernstein-Zelevinsky derivative $\tau^{(k)}$ has a central character, then there is a natural embedding 
\[\overline{S}:\tau^{(k)}\rightarrow\Ind_{N_{n-k}}^{G_{n-k}}(\psi),\] and we put $W(\tau^{(k)},\psi)=\overline{S}(\tau^{(k)})$. 
We can now state Theorem \ref{lethal lemma} as:

\begin{lethallemma}
Let $n >k \geqslant 1$. For any Schwartz function $\phi$ on $F^{n-k}$ and any $W_0\in W(\tau^{(k)},\psi)$, the
map $|\mathrm{det}(g)|^{k/2}W_0(g)\phi(\eta_{n-k} g)$ extends to a function in $\tau\subset \Ind_{N_n}^{P_n}(\psi)$. 
\end{lethallemma}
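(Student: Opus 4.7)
The plan is to proceed by induction on $k \geqslant 1$. For the base case $k = 1$, the statement should essentially be a reformulation of \cite[Proposition 1.7]{CPS} read through the identification $\overline{S}$ of Sections~\ref{CPS interpretation}--\ref{extension of Whittaker}: loc.~cit.\ describes how an element of $\tau$ restricts to the open $P_{n-1}$-orbit in $N_n\backslash P_n$ in terms of its first derivative combined with a Schwartz coefficient on $F^{n-1}$, and reading this description backwards yields the extension statement of the theorem. The factor $|\det|^{1/2}$ emerges from the modulus character built into the normalized Bernstein--Zelevinsky derivative functor.

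For the inductive step with $k \geqslant 2$, I would invoke the transitivity of the Bernstein--Zelevinsky derivative, namely $\tau^{(k)} = (\tau^{(1)})^{(k-1)}$, together with a compatible identification of Whittaker models on $G_{n-k}$. Since $\tau^{(k)}$ has a central character, the inductive hypothesis applies to the submodule $\tau^{(1)}$ of $\Ind_{N_{n-1}}^{P_{n-1}}(\psi)$ (obtained by composing $\overline{S}$ with restriction to $P_{n-1}$) with $k-1$ in place of $k$. This produces, for our given $W_0 \in W(\tau^{(k)},\psi)$ and $\phi \in \mathcal{C}_c^\infty(F^{n-k})$, an element $W' \in \tau^{(1)}$ whose restriction to the embedded copy $\{\diag(g,I_{k-1}) : g \in G_{n-k}\}$ of $G_{n-k}$ inside $P_{n-1}$ coincides with $g \mapsto |\det(g)|^{(k-1)/2} W_0(g) \phi(\eta_{n-k} g)$.

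I would then apply the base case to $\tau$, taking as input the element of $W(\tau^{(1)}, \psi)$ corresponding to $W'$, together with a Schwartz function $\phi' \in \mathcal{C}_c^\infty(F^{n-1})$ chosen so that $\phi'(\eta_{n-1}) = 1$ (for instance the characteristic function of a small compact open neighborhood of $\eta_{n-1}$). This yields an element $W \in \tau$ extending $h \mapsto |\det(h)|^{1/2} W'(h) \phi'(\eta_{n-1} h)$. Specializing to $h = \diag(g, I_{k-1})$ for $g \in G_{n-k}$, we have $\det(h) = \det(g)$ and $\eta_{n-1} h = \eta_{n-1}$ (the last row of $\diag(g, I_{k-1})$ is $\eta_{n-1}$ as soon as $k \geqslant 2$), so that
$$W(\diag(g, I_k)) = |\det(g)|^{1/2} \cdot |\det(g)|^{(k-1)/2} W_0(g) \phi(\eta_{n-k} g) = |\det(g)|^{k/2} W_0(g) \phi(\eta_{n-k} g),$$
as required. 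The bulk of the work, and the main obstacle, lies not in the induction itself — which is essentially bookkeeping — but in the $k=1$ case: precisely extracting the extension formula from \cite[Proposition 1.7]{CPS} in the language of our derivatives $\tau^{(1)}$ and the embedding $\overline{S}$, and tracking the normalization $|\det|^{1/2}$ coming from the modulus character, together with the compatibility between the embeddings $\overline{S}$ at different levels needed to justify the transitivity above. This is exactly what the preparatory material in Sections~\ref{CPS interpretation}--\ref{extension of Whittaker} is designed to supply.
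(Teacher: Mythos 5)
Your inductive step rests on the identity $\tau^{(k)}=(\tau^{(1)})^{(k-1)}$, and this is false: the Bernstein--Zelevinsky derivatives are \emph{not} transitive in this sense. The $k$-th derivative is $\Psi^-(\Phi^-)^{k-1}\tau$, where $\Phi^-$ and $\Psi^-$ are coinvariants of the \emph{same} unipotent group with respect to \emph{different} characters ($\psi$ versus the trivial character); applying $\Psi^-$ first, as you do, destroys exactly the part of $\tau$ that feeds the higher derivatives. Concretely, take $\pi=\pi_1\times\pi_2$ with $\pi_1$ cuspidal on $G_2$ and $\pi_2$ a character of $G_1$, and $\tau=\Res_{P_3}W(\pi,\psi)$: the Leibniz rule gives $\tau^{(1)}\simeq\pi_1$, hence $(\tau^{(1)})^{(1)}=\pi_1^{(1)}=0$ by cuspidality, whereas $\tau^{(2)}\simeq\pi_1^{(2)}\times\pi_2\simeq\pi_2\neq 0$. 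So for $k=2$ your induction produces nothing in a case where the theorem has genuine content (indeed this is exactly the situation of Proposition \ref{inclusion of whittaker model}). The correct reduction to the first-derivative case goes through $\Phi^-$, not $\Psi^-$: one has $\tau^{(k)}=\bigl((\Phi^-)^{k-1}\tau\bigr)^{(1)}$, and Proposition \ref{Phi} identifies $\Phi^-$ on $\Ind_{N_n}^{P_n}(\psi)$ with the twisted restriction $\nu^{-1/2}\Res_{P_{n-1}}$, which is surjective onto $\Ind_{N_{n-1}}^{P_{n-1}}(\psi)$; this is what both supplies the lift back to $\tau$ and accounts for the factor $\nu^{(k-1)/2}$. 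That is the route the paper takes, and it is not a cosmetic variant of yours.

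A secondary but real gap is the base case. The statement for $k=1$ and arbitrary submodules $\tau$ of $\Ind_{N_n}^{P_n}(\psi)$ with $\tau^{(1)}$ admitting a central character is strictly more general than the Corollary of \cite[Proposition 1.7]{CPS}, so it cannot simply be ``read backwards'' from loc.\ cit. The paper proves it in two steps: first a dilation argument (translate a preimage $W_1$ of $W_0$ under $S$ by $\diag(b,\dots,b,1)$ and use the stabilization of $z\mapsto W_1\left(\begin{smallmatrix}zg&\\&1\end{smallmatrix}\right)$ near $z=0$, uniformly over a finite set of $K$-translates) to achieve the desired equality on the region where $\eta_r g$ lies in the support of $\phi$; then a Fourier-transform/convolution over the last column to convert ``agreement on $\supp\phi$'' into genuine multiplication by $\phi$. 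Neither ingredient appears in your sketch, and both are needed.
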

In particular, this applies to the case where $\tau$ is a submodule of the space of restrictions to $P_n$ of functions in the Whittaker model of a representation of Whittaker type of $G_n$. 

\section*{Acknowledgements}
This work was started during a research visit of the second author (N.M.) to Imperial College London and he would like to thank them for their hospitality. 
The visit was supported by the LMS (Research in Pairs Grant) and by GDRI: Representation Theory (2016-2020).  The authors thank David Helm, Gil Moss, Dipendra Prasad and Shaun Stevens for fruitful conversations. Most importantly, we thank the referee for pointing out a mistake
in a computation of a previous version, his precise reading and accurate corrections, and his very 
useful suggestions concerning the presentation of the paper.
The second author was supported by the grant ANR-13-BS01-0012 FERPLAY.

\section{Notation and Preliminaries}\label{notation}
As in the introduction, $F$ denotes a locally compact non-archimedean local field, we put $G_n=\GL_n(F)$, and $\psi$ denotes a nontrivial character of $F$.  We denote by $|\,\cdot\,|$ the normalized absolute value of $F$, by $\o$ its ring of integers, and by $\p$ the maximal ideal of $\o$. 
We let $q$ denote the order of the residue field $\o/\p$, $Z_n$ denote the centre of $G_n$, and $K_n=\GL_n(\o)$. We denote by $\nu$ the character of $G_n$ given by $\nu(g)=\abs{\det(g)}$ for 
$g\in G_n$. Let $P_n$ be the \textit{mirabolic subgroup} of $G_n$, consisting of all matrices in $G_n$ with bottom row $\eta_n=(0,\dots,0,1)$. Any 
$p\in P_n$ can be written 
in a uniquely as $p=g(p)u(p)$ for $g(p)\in G_{n-1}$, and $u(p)\in U_n$ the unipotent radical of the standard (block upper triangular) parabolic 
subgroup of $G_n$ of type $(n-1,1)$.  For $p\in P_n$, we put $\d_{P_n}(p)=\nu(g(p))$. We have the \emph{Iwasawa decomposition} \[G_n=P_nZ_nK_n,\] and if $g=pzk$ 
in this decomposition, then $\nu(z)$ only depends on $g$, and we 
shall write $\nu(z(g))$ for $\nu(z)$. We consider $G_k$ as a subgroup of $G_n$ for $1\leqslant k \leqslant n$ via the embedding 
$g\mapsto \diag(g,I_{n-k})$.  



Let $G$ be a locally compact totally disconnected group.  By a \emph{representation} of $G$ we mean a smooth representation on a complex vector space. We denote by $\Sm(G)$ the category 
of (smooth) representations of $G$, and by $\rep(G)$ the category of finite length representations of $G$. When practical, we use the same notation for the collection 
of objects in a category and the underlying category; so, for example, $\pi\in\Sm(G)$ will mean $\pi$ is an object of $\Sm(G)$.  

We let $G_0$ denote the trivial group. For $G=G_n$ we set 
$\Sm(n)=\Sm(G_n)$, $\rep(n)=\rep(G_n)$, $\Sm =\bigsqcup_{n=0}^\infty \Sm(n)$ and $\rep=\bigsqcup_{n=0}^\infty \rep(n)$. We also denote by  $\Irr(n)$ the collection of 
irreducible representations of $G_n$, and set $\Irr=\bigsqcup_{n=0}^\infty \Irr(n)$. 

For $H$ a closed subgroup of a locally compact totally disconnected group $G$, we use the notation $\Ind_H^G:\Sm(H)\rightarrow\Sm(G)$ for the functor 
of normalized induction.  For representations $\pi_i$ of $G_{n_i}$, $i=1,\dots,t$ we denote by $\pi_1\times\cdots \times \pi_t$ the 
representation of $G_{n_1+\cdots +n_t}$ obtained from $\pi_1\otimes\cdots \otimes \pi_t$ by normalized 
parabolic induction for the standard parabolic of type $(n_1,\cdots,n_t)$. For a representation $\pi$ and a character 
$\chi$ of $G_n$ let $\chi\pi$ be the representation on the space of $\pi$ given by $(\chi\pi)(g)=\chi(g)\pi(g)$ for $g\in G_n$. 

 In \cite[Section 3]{BZ77}, the authors define functors 
 \begin{align*}
 \Phi^-:\Sm(P_n)&\rightarrow \Sm(P_{n-1});\\
\Psi^-:\Sm(P_n)&\rightarrow \Sm(G_{n-1}). 
\end{align*}
(in fact, such functors originally appear in \cite{BZ76}, but we use the modified definition of \cite{BZ77}). It is shown in \cite{BZ77} that $\Phi^-,\Psi^-$ are exact, and restrict as functors to
\begin{align*}\Phi^-:\rep(P_n)&\rightarrow \rep(P_{n-1});\\\Psi^-:\rep(P_n)&\rightarrow \rep(G_{n-1}).\end{align*} It is also shown for a representation $\pi\in \Sm(G_n)$, that $\pi\in\rep(G_n)$ if 
and only if $\pi\vert_{P_n}\in\rep(P_n)$. Following 
\cite{BZ77}, for $\pi\in \Sm(P_n)$, and $k\geqslant 1$, we set \[\pi^{(k)}=\Psi^-(\Phi^-)^{k-1} \pi, \qquad \pi^{(0)}=\pi.\] 
For $\pi\in \Sm(G_n)$, and $k\geqslant 0$, we set $\pi^{(k)}=(\pi\mid_{P_n})^{(k)}$. 

Let $N_n$ be the group of upper triangular unipotent matrices in $G_n$ and, by abuse of notation, let $\psi$ also denote the character on $N_n$ defined by
\[
\psi(u)=\psi(u_{1,2}+\cdots +u_{n-1,n}),
\]
for $u\in N_n$ with $(i,j)$-th entry $u_{i,j}$.

By Frobenius reciprocity and \cite[Proposition 3.2]{BZ77}, for $\pi\in \Sm(P_n)$, we have
\[\Hom_{N_n}(\pi,\psi)\simeq \Hom_{P_n}(\pi,\Ind_{N_n}^{P_n}(\psi)) \simeq \Hom_{\C}(\pi^{(n)},\C).\] 
In particular when $\pi$ belongs to $\rep(P_n)$, the (finite) dimension of $\pi^{(n)}$ is precisely that of the space $\Hom_{N_n}(\pi,\psi)$ of \emph{Whittaker functionals} on $\pi$. Following the authors of \cite{JPSS83}, we introduce the following classes of representations.

\begin{definition}\label{Whittaker type}
We say that a representation $\pi\in \rep(P_n)$ is of \emph{Whittaker type} if $\Hom_{N_n}(\pi,\psi)$ is of dimension one. 
We say that a representation $\pi\in \rep(G_n)$ is of \emph{Whittaker type} if $\pi\mid_{P_n}$ is of Whittaker type, i.e. if 
$\Hom_{N_n}(\pi,\psi)$ is of dimension one.  
\end{definition}

In either case, a finite length representation $\pi$ is of Whittaker type if and only if $\pi^{(n)}\simeq \C$. In particular, 
if $\pi_1$ and $\pi_2$ are both representations of Whittaker type of $G_{n_1}$ and $G_{n_2}$ respectively, then 
the representation $\pi_1\times \pi_2$ is also of Whittaker type according to \cite[Corollary 4.14, c)]{BZ77}. 
Let $\pi$ be a representation of Whittaker type of 
$P_n$ (resp. $G_n$). By Frobenius reciprocity, 
there is a unique up to scalar nonzero intertwining operator from $\pi$ to $\Ind_{N_n}^{P_n}(\psi)$ 
(resp. $\Ind_{N_n}^{G_n}(\psi)$), and we denote by $W(\pi,\psi)$ the image of $\pi$ and call it the \textit{Whittaker model} of $\pi$, though it is not always a \emph{model} of $\pi$, i.e.~in general $W(\pi,\psi)$ 
is a quotient of $\pi$, but not isomorphic to it.

An irreducible representation of Whittaker type of $G_n$ is called \emph{generic}.  In fact, by \cite{GK71}, the generic representations of $G_n$ are those 
irreducible representations $\pi$ such that $\Hom_{N_n}(\pi,\psi)\neq 0$. By exactness of the $n$-th derivative functor, a representation of $G_n$ of Whittaker type 
has a unique generic subquotient.  For the group $G_n$, it follows from \cite[Lemma 4.5]{BZ77} and \cite[Section 9]{Z80}, that if $\d_1,\dots,\d_t$ are
irreducible \textit{essentially square integrable}, which we call \textit{discrete series}, representations, then 
$\d_1\times \cdots \times \d_t$ is a representation of Whittaker type.

If $\d$ is an irreducible discrete series representation, we denote by $e(\d)$ the unique real number such that $\nu^{-e(\d)}\d$ is unitary. We say that a representation in $\rep$ is 
a \textit{standard module} if it is of the form $\d_1\times \cdots \times \d_t$, with $\d_i$'s irreducible discrete series such that 
$e(\d_1)\geqslant \dots \geqslant e(\d_t)$ (these representations are called \emph{induced of Langlands type} in \cite{JPSS83} and \cite{JS83}). If all the $\d_i$'s are cuspidal, we say that the standard module $\d_1\times \cdots \times \d_t$ is \textit{cuspidally induced}. By \cite{S78}, a 
standard module $\S$ has a unique irreducible quotient $\pi(\S)$, and the map 
\[\S\in (\rep_\stand(n)/\simeq) \mapsto \pi(\S)\in (\Irr(n)/\simeq)\] is a bijection; the inverse of this bijection we denote by 
$\pi\mapsto \S(\pi)$. We call $\S(\pi)$ the standard module over $\pi$. 

We will use the following lower index notation. If $A$ is one of the collections $\Sm, \rep$ or $\Irr$, and 
$\bullet$ is an abbreviation of a type of representation, we will denote by $A_\bullet$ the collection of representations of type $\bullet$ inside $A$. If a representation $\pi$ in $A$ has type $\bullet_1$ and $\bullet_2$ together, we will write 
$\pi\in A_{\bullet_1,\bullet_2}$. We will denote by $A_{\bullet}(n)$ the representations of $G_n$ in $A_\bullet$.
We use the following abbreviations of types of representations:

\begin{itemize}
 \item $=~\cusp$: cuspidal, 
 \item $=~\disc$: essentially square integrable/discrete series, 
 \item $=~\gen$: generic, 
 \item $=~\stand$: standard module, 
 \item $=~\stand-\cusp$: cuspidally induced standard module, 
 \item $=~\whitt$: Whittaker type, 
 \item $=~\cent$: with central character.
\end{itemize}

For example, we have the well known inclusions 
\begin{align*}
&\Irr\subset \Sm_\cent,  &&\Irr_\cusp \subset \Irr_\disc \subset \Irr_\gen\subset\rep_{\whitt,\cent}, \\
&\rep_\stand \subset \rep_{\whitt,\cent}&& \Irr\cap \rep_\stand =\Irr \cap \rep_\whitt= \Irr_{\gen}.
\end{align*}
 Notice that $\rep_{\stand-\cusp}$ is different from  $\rep_{\stand,\cusp}=\Irr_\cusp$.
Recall that
\[
(\Irr_\disc/\simeq)=\{\St_k(\rho):\rho\in(\Irr_\cusp/\simeq),\,k\in\N\}
\]
where $\St_k(\rho)$ is the up to nonzero scalar unique isomorphism unique irreducible quotient of $\nu^{(1-k)/2}\rho\times \nu^{(3-k)/2}\rho\times\cdots\times\nu^{(k-1)/2}\rho$ (see \cite[Theorem 9.3]{Z80}). By \cite[Secion 9.1]{Z80}, the representation $\St_k(\rho)$ is also the unique irreducible submodule of 
\[\S_k(\rho)=\nu^{(k-1)/2}\rho\times \nu^{(k-3)/2}\rho\times\cdots\times \nu^{(1-k)/2}\rho\in \rep_{\stand},\] and 
we denote by $\Sp_k(\rho)$ its irreducible quotient, i.e. $\S_k(\rho)=\S(\Sp_k(\rho))$. The author of \cite{Z80} attaches to 
$\d=\St_k(\rho)$ the \emph{cuspidal segment} $\Delta=[\nu^{\frac{1-k}{2}}\rho,\dots,\nu^{\frac{k-1}{2}}\rho]$ (see \cite[Sections 3 and 9]{Z80}), and defines \emph{being linked} and \emph{preceding} relations on the set of cuspidal segments in \cite[Section 4.1]{Z80}. According to \cite[Section 9]{Z80}, if $\d=\St_k(\rho)$, then its Zelevinsky dual is $^t\d=\Sp_k(\rho)$  (see \cite[Section 9]{Z80} for the definition of the Zelevinsky dual).   An easy induction on the number of segments using the fact that discrete series corresponding to unlinked segments commute in the sense of parabolic induction shows the well known fact that $\pi$ is in $\rep_\stand$ if and only if it is isomorphic to a representation of the form $\d_1\times \dots \times \d_t$ with $\d_i$ discrete series representations such that $\D_i$ does not precede $\D_j$ for $i<j$. 
By \cite[Theorem 9.7]{Z80}, $\pi$ belongs to $\Irr_{\gen}$ if and only if it can be written as a (necessarily commutative) product of discrete series $\pi=\d_1\times \dots \times \d_t$ where the corresponding $\D_i$'s are unlinked, the $\d_i$'s being unique up to reordering. In particular $\Irr\cap \rep_\stand = \Irr_{\gen}$ as we mentioned before and $\S(\pi)=\pi$ when $\pi$ is generic.

We denote by $\mathcal{C}_c^\infty(F^n)$ the space of smooth functions on $F^n$ with compact support. 

\section{Whittaker models and derivatives}

Here we recall some useful but not very well-known facts from \cite{EH14} and \cite{CPS} about Whittaker models and the interpretation of derivatives in the space of Whittaker functions. Then we push the techniques of \cite{CPS} in the spirit of \cite{M13} to obtain our first main result, which is Theorem \ref{lethal lemma} about extending Whittaker functions to larger linear groups.

\subsection{The Whittaker model of representations of Whittaker type}\label{whittaker type}

Here we highlight a result of \cite{EH14} on Whittaker models of representations of Whittaker type. We shall only need it in order to prove that
our first main result Theorem \ref{lethal lemma} extends \cite[Proposition 9.1]{JPSS83}. However, the result was not known to us until recently, and 
it is a quite striking result on representations of Whittaker type. It says that the Whittaker model of such a representation is a submodule of the 
Whittaker model of a standard module. Notice that by the main result of \cite{JS83}, the Whittaker model of a standard module is 
isomorphic to this standard module and it moreover  has a Kirillov model. The result follows from 
\cite[Lemma 3.2.4]{EH14} and \cite[Lemma 4.3.9]{EH14}. The setting of \cite{EH14} being much more general and the result being stated there with a different terminology, 
we give a largely self-contained proof here, which in any case uses the ideas of \cite{EH14}. 

If $\pi$ belongs to $\rep_\whitt$, we denote by $\pi^\gen$ its unique generic subquotient. Notice that 
\[\pi^\gen\simeq W(\pi^\gen,\psi) \subset W(\pi,\psi),\] and that $W(\pi^\gen,\psi)$ is the unique irreducible submodule of 
$W(\pi,\psi)$. We give a different classification of generic representations, due to Zelevinsky as well. 

\begin{prop}\label{Zgen}
Let $\pi\in \Irr_\gen(n)$, then it is the unique irreducible submodule of a cuspidally induced standard module $\S_c(\pi)\in \rep_{\stand-\cusp}(n)$. 
All cuspidally induced standard modules containing $\pi$ are isomorphic.
\end{prop}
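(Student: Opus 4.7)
The plan is to combine three facts: (1) the Zelevinsky classification of $\pi\in\Irr_\gen$ as a product $\d_1\times\cdots\times\d_t$ of discrete series with pairwise unlinked segments; (2) the description of each discrete series $\St_k(\rho)$ as the unique irreducible submodule of its cuspidally induced standard module $\S_k(\rho)$; and (3) Zelevinsky's theorem that an induction from single-element segments in non-preceding order has a unique irreducible submodule.

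First, I would write $\pi\cong \d_1\times\cdots\times\d_t$ with $\d_i=\St_{k_i}(\rho_i)$ and pairwise unlinked $\Delta_i$, using \cite[Theorem 9.7]{Z80}. Each $\d_i$ embeds as the unique irreducible submodule of $\S_{k_i}(\rho_i)$, so by exactness of parabolic induction,
\[\pi\hookrightarrow \S_{k_1}(\rho_1)\times\cdots\times\S_{k_t}(\rho_t).\]
Letting $\Sigma$ denote the cuspidal support of $\pi$ as a multiset, I define $\S_c(\pi)$ to be the induced representation of $\Sigma$ arranged in decreasing order of $e$; this is a cuspidally induced standard module in $\rep_{\stand-\cusp}(n)$. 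A direct check shows that the decreasing-$e$ ordering of single-element segments is Zelevinsky's non-preceding ordering, since $\{\sigma\}$ precedes $\{\sigma'\}$ iff $e(\sigma')=e(\sigma)+1$. By Zelevinsky's classification \cite{Z80}, $\S_c(\pi)$ therefore has a unique irreducible submodule.

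To identify this submodule as $\pi$, I would use that $\S_c(\pi)$ is of Whittaker type by \cite[Corollary 4.14 c)]{BZ77} and therefore has a unique generic subquotient, namely the unique generic irreducible representation with cuspidal support $\Sigma$, which is $\pi$ itself. Since the unique irreducible submodule of $\S_c(\pi)$ is in particular a subquotient, it suffices to check that this submodule is generic; this follows from the Zelevinsky classification, where Zelevinsky duality swaps sub/quotient structure between the cuspidally induced standard module (whose unique irreducible submodule is the generic representation) and the discrete-series-induced standard module $\S(\pi)$ (whose unique irreducible quotient is $\pi$ itself in the Langlands description).

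For the uniqueness of $\S_c(\pi)$ up to isomorphism, any $\S'\in\rep_{\stand-\cusp}(n)$ containing $\pi$ shares the cuspidal support $\Sigma$ and arranges it in decreasing-$e$ order; any two such arrangements differ by permuting cuspidals within equal-$e$ blocks. Such cuspidals are automatically unlinked (since $e(\nu\sigma)>e(\sigma)$), so they commute under parabolic induction, giving $\S'\cong\S_c(\pi)$. The main obstacle I foresee is the precise identification of the unique irreducible submodule of $\S_c(\pi)$ as the generic representation $\pi$, rather than some other irreducible with the same cuspidal support; this step relies essentially on the interplay between Zelevinsky duality and the Whittaker structure, and is the one place where the hypothesis $\pi\in\Irr_\gen$ is genuinely used.
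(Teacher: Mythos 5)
Your route is genuinely different from the paper's. The paper's proof is a direct application of \cite[Theorem 6.1]{Z80}: writing $\pi=\langle a\rangle$ for a multiset $a$ of segments in non-preceding order, genericity forces the highest derivative $\pi^{(n)}\simeq\C$, and the formula for the highest derivative of $\langle a\rangle$ in that theorem then forces every segment of $a$ to be a single cuspidal representation, which is exactly the statement; uniqueness drops out of the same theorem. You instead start from \cite[Theorem 9.7]{Z80}, build $\S_c(\pi)$ out of the cuspidal support arranged in decreasing $e$-order, observe it has a unique irreducible submodule, and try to identify that submodule with $\pi$ via genericity and uniqueness of the generic constituent of a given cuspidal support. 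This architecture can be made to work, but as written it has one genuine gap, precisely at the step you yourself flagged as the main obstacle.

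The gap: you must show that the unique irreducible submodule of $\S_c(\pi)$ is \emph{generic}, and your justification --- ``Zelevinsky duality swaps sub/quotient structure between the cuspidally induced standard module (whose unique irreducible submodule is the generic representation) and the discrete-series-induced standard module'' --- is circular: the parenthetical is exactly the assertion to be proved, and the duality statement relates the unique submodule to the Langlands quotient of the same induced representation without telling you which constituent is generic (and it would in any case import the Moeglin--Waldspurger/Aubert theorem, far heavier than needed). Two honest repairs: (a) the paper's own mechanism, namely that for $a$ a multiset of singleton segments $\langle a\rangle^{(n)}=\langle a^{-}\rangle=\C$ by \cite[Theorem 6.1]{Z80}, so $\langle a\rangle$ is generic; or (b) a Whittaker-model argument: $\S_c(\pi)\simeq W(\S_c(\pi),\psi)\subset \Ind_{N_n}^{G_n}(\psi)$ by \cite{JS83}, every nonzero submodule of $\Ind_{N_n}^{G_n}(\psi)$ carries a nonzero Whittaker functional, hence its unique irreducible submodule is generic by \cite{GK71}. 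Two smaller points. First, the embedding $\pi\hookrightarrow \S_{k_1}(\rho_1)\times\cdots\times\S_{k_t}(\rho_t)$ is never used in your argument, and you should not try to convert it into an embedding into $\S_c(\pi)$ by reordering cuspidals: constituents of distinct unlinked segments can be linked to each other as cuspidals (e.g.\ the segments $\{\1\}$ and $[\nu^{-1},\1,\nu]$ for $G_1$), and swapping linked cuspidals changes the isomorphism class of the induced representation. Second, ``$\{\sigma\}$ precedes $\{\sigma'\}$ iff $e(\sigma')=e(\sigma)+1$'' should read ``iff $\sigma'\simeq\nu\sigma$'', which is strictly stronger; the implication you actually need (decreasing $e$-order implies non-preceding) is the correct one and survives.
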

\begin{proof}
By \cite[Theorem 6.1]{Z80}, with the same notations, $\pi$ is of the form $\langle a\rangle$ for $a$ a sequence of cuspidal segments satisfying 
the non-preceding ordering condition (see \cite{Z80} for the precise statement). However, the representation $\pi$ being generic, 
its highest derivative is $\pi^{(n)}\simeq \C$. By~\cite[Theorem 6.1]{Z80}, this implies that the cuspidal segments occuring in $a$ are all 
cuspidal representations, and this exactly says that $\pi$ is the unique irreducible submodule of a cuspidally induced standard module. It is also a
consequence of \cite[Theorem 6.1]{Z80} that all cuspidally induced standard modules containing $\pi$ are isomorphic
\end{proof}

If $\pi$ is of Whittaker type, we set $\S_c(\pi)=\S_c(\pi^\gen)$, we will soon show that $W(\pi,\psi)\subset W(\S_c(\pi),\psi)$, as a consequence of the following proposition.

\begin{lemma}\label{ext}
Let $\pi\in \rep_\whitt(n)$, if $\tau\in \Irr(n)$ admits a nontrivial extension by $\S_c(\pi)$, then $\tau=\pi^\gen$. 
\end{lemma}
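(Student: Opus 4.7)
The plan is to apply the exact $n$-th Bernstein--Zelevinsky derivative $(-)^{(n)}$ to a hypothetical nonsplit short exact sequence $0 \to \S_c(\pi) \to E \to \tau \to 0$. Since $\S_c(\pi)$ is a standard module (in particular of Whittaker type), $\S_c(\pi)^{(n)} \simeq \C$, yielding $0 \to \C \to E^{(n)} \to \tau^{(n)} \to 0$. First I would handle the case where $\tau$ is generic, so $\tau^{(n)} \simeq \C$. Because $\Ext^1$ vanishes between irreducibles lying in distinct Bernstein blocks, and two irreducibles of $\GL_n(F)$ belong to the same block precisely when they share the same cuspidal support (as multisets of cuspidal representations), $\tau$ must have the same cuspidal support as $\S_c(\pi)$, which coincides with that of $\pi^\gen$. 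Proposition \ref{Zgen} (uniqueness of the generic irreducible with a given cuspidal support) then immediately yields $\tau \simeq \pi^\gen$.

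Next I would handle the potential case where $\tau$ is non-generic, so that $\tau^{(n)} = 0$ and $E^{(n)} \simeq \C$, making $E$ itself of Whittaker type. A generator $\lambda$ of $\Hom_{N_n}(E,\psi)$ must restrict to a nonzero Whittaker functional on $\S_c(\pi)$, for otherwise $\lambda$ would descend to the non-generic $\tau$, forcing $\Hom_{N_n}(\tau,\psi) \neq 0$. The associated Whittaker map $W_\lambda \colon E \to \Ind_{N_n}^{G_n}(\psi)$ is injective on $\S_c(\pi)$ by the Jacquet--Shalika theorem \cite{JS83} that standard modules are isomorphic to their Whittaker models, so $\ker(W_\lambda)$ meets $\S_c(\pi)$ trivially and therefore embeds into $\tau = E/\S_c(\pi)$. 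Irreducibility of $\tau$ then forces $\ker(W_\lambda) = 0$, since otherwise $\ker(W_\lambda) \simeq \tau$ would provide a section of $E \twoheadrightarrow \tau$ and split the sequence, contrary to assumption. Consequently $E \simeq W(E,\psi)$ sits inside $\Ind_{N_n}^{G_n}(\psi)$ as a finite-length submodule with simple socle $\pi^\gen$ that strictly contains $W(\S_c(\pi),\psi) \simeq \S_c(\pi)$.

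The main obstacle is to exclude the existence of such an $E$. Following the circle of ideas of \cite[Lemmas 3.2.4 and 4.3.9]{EH14}, one argues that $W(\S_c(\pi),\psi)$ is maximal among finite-length submodules of $\Ind_{N_n}^{G_n}(\psi)$ with simple socle $\pi^\gen$: any strictly larger such submodule would realize a Whittaker-type representation with generic subquotient $\pi^\gen$ that is not dominated by the universal object $\S_c(\pi^\gen) = \S_c(\pi)$. This contradicts the construction of $E$, so $\tau$ cannot be non-generic, and the first paragraph completes the proof with $\tau \simeq \pi^\gen$.
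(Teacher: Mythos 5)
Your first case ($\tau$ generic) is essentially sound, but the stated justification needs repair: Bernstein blocks are indexed by \emph{inertial} classes of cuspidal supports, so lying in the same block only forces the cuspidal supports of $\tau$ and $\S_c(\pi)$ to agree up to unramified twists. What you actually need is the finer fact that the Bernstein centre acts on $\Ext^1_{G_n}(\tau,\S_c(\pi))$ compatibly through its characters on the two arguments, so nonvanishing forces the infinitesimal characters --- equivalently the cuspidal supports as genuine multisets --- to coincide; with that correction, uniqueness of the generic member of a cuspidal support class does give $\tau\simeq\pi^\gen$. Note that the paper does not split into cases at all: it applies the second adjunction to get $\Ext^1_M(\J_G^M(\tau),\rho)\neq 0$, uses \cite{B84} and \cite[Theorem 2.4 (b)]{BZ77} to deduce that $\rho$ is a quotient of $\J_G^M(\tau)$, hence that $\Hom_{G_n}(\tau,\S_c(\pi))\neq 0$, and concludes because $\pi^\gen$ is the unique irreducible submodule of $\S_c(\pi)$; this treats generic and non-generic $\tau$ uniformly.

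The genuine gap is in your second case, which is exactly where the content of the lemma lies: for example $\1_{G_2}$ has the same cuspidal support as $\St_2$, so no cuspidal-support or block argument can exclude non-generic $\tau$. Your construction of the embedding $E\hookrightarrow \Ind_{N_n}^{G_n}(\psi)$ whose image strictly contains $W(\S_c(\pi),\psi)$ is correct, but the concluding step --- that $W(\S_c(\pi),\psi)$ is maximal among finite-length submodules of $\Ind_{N_n}^{G_n}(\psi)$ with simple socle $\pi^\gen$ --- is precisely the Emerton--Helm domination statement, i.e.\ Proposition \ref{kirillov} applied to the Whittaker-type representation $E$ (which satisfies $\S_c(E)=\S_c(\pi)$), and the paper \emph{deduces} that proposition \emph{from} Lemma \ref{ext} by an argument very close to yours run in reverse. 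Invoking it, or the ``circle of ideas'' behind it, at this point is circular: you have reduced the lemma to a statement at least as strong as the lemma. To close the gap you would need an independent proof of that maximality --- for instance the Jacquet-module and second-adjunction computation the paper actually carries out, or the original arguments of \cite{EH14}.
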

\begin{proof}
Set $G=G_n$. Write $\S_c(\pi)=\rho_1\times \dots \times \rho_t$ with the cuspidal segments $[\rho_i]$ satisfy the 
non-preceeding condition described at the end of Section \ref{notation}, we set $M$ to be the standard Levi subgroup of $G$, such that 
\[\rho=\rho_1\otimes \dots \otimes \rho_t\] is a representation of $M$. We denote by $\J^G_M$ the normalized Jacquet functor from 
$\Sm(G)$ to $\Sm(M)$. Suppose that $\tau$ admits a nontrivial extension by $\S_c(\pi)$, this means that 
\[\Ext_G^1(\tau,\S_c(\pi))\neq 0,\]
(see \cite{C81} for the basic definitions concerning $\Ext$-functors in the categories $\Sm(G)$ and  $\Sm(M)$).  Then by \cite[Theorem A.12]{C81}, this is equivalent to \[\Ext_M^1(\J_G^M(\tau),\rho)\neq 0.\] We claim that this implies there is an irreducible subquotient $\mu$ of 
$\J_G^M(\tau)$ (which has finite length) which admits a nontrivial extension by $\rho$. Let's justify this claim by writing an exact sequence 
\[0\rightarrow \J_1 \rightarrow \J_G^M(\tau) \rightarrow \J_2\rightarrow 0\] with $\J_2$ irreducible. Then writing the long exact sequence 
\begin{center}
\begin{tikzpicture}[descr/.style={fill=white,inner sep=1.5pt}]
\matrix (m) [matrix of math nodes, column sep=2.5em, row sep=1.5em,text height=1.5ex, text depth=0.25ex, ]
{
 {0}& { \Hom_M(\J_2,\rho) }& { \Hom_M(\J_G^M(\tau),\rho)} & {\Hom_M(\J_1,\rho)}\\
& { \Ext_M^1(\J_2,\rho)}&{ \Ext_M^1(\J_G^M(\tau),\rho) }& { \Ext_M^1(\J_1,\rho)}&{\cdots,}\\
};
       \path[->, scale=4,overlay]
        (m-1-1) edge (m-1-2)
        (m-1-2) edge (m-1-3)
         (m-1-3) edge (m-1-4)
        (m-1-4) edge[out=355,in=175]  (m-2-2)
        (m-2-2) edge (m-2-3)
        (m-2-3) edge (m-2-4)
        (m-2-4) edge (m-2-5);
\end{tikzpicture}
\end{center}
%
this implies that $\Ext_M^1(\J_2,\rho)$ and $\Ext_M^1(\J_1,\rho)$ can not be zero together because 
$\Ext_M^1(\J_G^M(\tau),\rho)$ is not. If $\Ext_M^1(\J_2,\rho)\neq 0$ then we set $\mu=\J_2$, otherwise we repeat the same operation with $\J_1$ instead of $\J_G^M(\pi)$ (as $\J_G^M(\pi)$ is of finite length, this process terminates to find an irreducible $\mu$).   Now by \cite{B84}, we must have $\mu\simeq \chi\rho$ for $\chi$ an unramified character of $G$ because 
$\rho$ is cuspidal, but then 
$\mu\simeq \rho$ by cuspidality of $\tau$ and $\rho$ (otherwise they would be in direct sum by \cite[Theorem 2.4. (b)]{BZ77}). Hence 
$\rho$ is a subquotient of $\J_G^M(\tau)$, so it is a quotient of it as well (\cite[Theorem 2.4. (b)]{BZ77}). 
Thus $\Hom_G(\tau,\S_c(\pi))\simeq \Hom_M(\J_G^M(\tau),\rho)\neq 0$, and hence $\tau$ is the unique irreducible submodule of $\S_c(\pi)$, 
i.e. $\tau\simeq \pi^\gen$.  
\end{proof}

We denote by $\Res_{P_n}$ the map from $\Ind_{N_n}^{G_n}(\psi)$ to $\Ind_{N_n}^{P_n}(\psi)$ which is the restriction of functions to $P_n$.
By the main result of \cite{JS83}, if $\pi\in \rep_{\stand}(n)$, then $\pi\simeq W(\pi,\psi)$ and $\Res_{P_n}$ is injective on 
$W(\pi,\psi)$. We will now show that the last part of this result remains true for $\pi\in \rep_\whitt$, i.e. that if $\pi\in \rep_\whitt$, then 
$W(\pi,\psi)$ has a Kirillov model.

\begin{prop}\label{kirillov}[\cite[Lemma 3.2.4]{EH14}, \cite[Lemma 4.3.9]{EH14}]
Let $\pi\in \rep_\whitt(n)$ with $n\geqslant 2$, then $W(\pi,\psi)\subset W(\S_c(\pi),\psi)$, in particular $\Res_{P_n}$ is injective on 
$W(\pi,\psi)$.
\end{prop}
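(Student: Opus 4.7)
My plan is to establish the inclusion $W(\pi,\psi) \subset W(\S_c(\pi),\psi)$ directly inside $\Ind_{N_n}^{G_n}(\psi)$, after which injectivity of $\Res_{P_n}$ on $W(\pi,\psi)$ follows from the main result of \cite{JS83}, which gives this injectivity on the standard module $W(\S_c(\pi),\psi) \simeq \S_c(\pi)$. Setting $V = W(\pi,\psi) + W(\S_c(\pi),\psi) \subset \Ind_{N_n}^{G_n}(\psi)$ and $Q = V/W(\S_c(\pi),\psi)$, the task reduces to showing $Q = 0$, since by the second isomorphism theorem $Q \simeq W(\pi,\psi)/\bigl(W(\pi,\psi) \cap W(\S_c(\pi),\psi)\bigr)$. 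The preliminary observation is that uniqueness of the Whittaker functional on the irreducible generic representation $\pi^\gen$, via Frobenius reciprocity, yields $\dim \Hom_{G_n}(\pi^\gen, \Ind_{N_n}^{G_n}(\psi)) = 1$; hence the socles of $W(\pi,\psi)$ and of $W(\S_c(\pi),\psi)$ (each isomorphic to $\pi^\gen$, the latter via Proposition \ref{Zgen}) coincide as a single submodule $W(\pi^\gen,\psi)$ of $\Ind_{N_n}^{G_n}(\psi)$, which therefore lies inside $W(\pi,\psi) \cap W(\S_c(\pi),\psi)$.

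Arguing by contradiction, suppose $Q \neq 0$. As $Q$ is a finite-length quotient of $W(\pi,\psi)$, it admits an irreducible submodule $\tau$, whose preimage $E$ in $V$ yields
\[
0 \to W(\S_c(\pi),\psi) \to E \to \tau \to 0.
\]
I would treat the split and non-split cases separately. If this extension is non-split, Lemma \ref{ext} forces $\tau \simeq \pi^\gen$. If it splits, then $\tau$ embeds as a submodule of $E \subset \Ind_{N_n}^{G_n}(\psi)$ and is therefore generic; but $\tau$ is also a subquotient of $\pi$ (since $Q$ is a quotient of $W(\pi,\psi)$, hence of $\pi$), and the Whittaker type assumption singles out $\pi^\gen$ as the unique generic subquotient of $\pi$, giving again $\tau \simeq \pi^\gen$.

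The delicate closing step, where I expect the main conceptual content to lie, is a multiplicity count. Since $\pi \in \rep_\whitt(n)$ means $\dim \pi^{(n)} = 1$, exactness of the top Bernstein--Zelevinsky derivative together with the dichotomy $\dim \sigma^{(n)} \in \{0,1\}$ for irreducible $\sigma$ (with $1$ exactly when $\sigma$ is generic) forces $\pi^\gen$ to occur with multiplicity exactly one in any composition series of $\pi$, and therefore also of the quotient $W(\pi,\psi)$. Because $W(\pi^\gen,\psi)$ already appears inside $W(\pi,\psi) \cap W(\S_c(\pi),\psi)$, additivity of multiplicities in the short exact sequence defining $Q$ forces the multiplicity of $\pi^\gen$ in $Q$ to vanish, contradicting $\tau \simeq \pi^\gen \subset Q$. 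Hence $Q = 0$, establishing the inclusion and with it the Kirillov model property.
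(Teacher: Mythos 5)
Your argument is correct and follows essentially the same route as the paper's proof: form the quotient $\bigl(W(\pi,\psi)+W(\S_c(\pi),\psi)\bigr)/W(\S_c(\pi),\psi)$, take an irreducible submodule $\tau$, split the extension of $\tau$ by $\S_c(\pi)$ into the trivial case (where $\tau$ embeds in $\Ind_{N_n}^{G_n}(\psi)$, hence is generic) and the non-trivial case (where Lemma \ref{ext} applies), and then derive a contradiction with the multiplicity-one property of the generic subquotient of $W(\pi,\psi)$. Your extra step of pinning down $\tau\simeq\pi^\gen$ in both cases, and the explicit socle identification via $\dim\Hom_{G_n}(\pi^\gen,\Ind_{N_n}^{G_n}(\psi))=1$, are just more detailed renderings of the same multiplicity count the paper invokes.
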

\begin{proof}
Consider $W(\S_c(\pi),\psi)\subset W(\S_c(\pi),\psi)+W(\pi,\psi)\subset \Ind_{N_n}^{G_n}(\psi)$. If $W(\pi,\psi)$ was not contained in 
$W(\S_c(\pi),\psi)$, considering a Jordan-H\"older sequence of $\frac{W(\S_c(\pi),\psi)+W(\pi,\psi)}{W(\S_c(\pi),\psi)}$
, there would exist a $G_n$-module $V$ such that $W(\S_c(\pi),\psi)\subset V \subset W(\S_c(\pi),\psi)+W(\pi,\psi)$, and 
such that $\tau=\frac{V}{W(\S_c(\pi),\psi)}$ is irreducible. If the extension $V$ of $\tau$ by $\S_c(\pi)$ was trivial, 
then $\tau\subset V \subset \Ind_{N_n}^{G_n}(\psi)$ would be generic. If nontrivial, by Lemma \ref{ext}, 
this would also imply that $\tau$ is generic. This is absurd as $\tau$ would be a generic subquotient of $W(\pi,\psi)$ different from 
$W(\pi^{\gen},\psi)$, because $W(\pi^{\gen},\psi)$ is contained in $W(\S_c(\pi),\psi)$, contradicting multiplicity $1$ for $W(\pi,\psi)$. As $\S_c(\pi)$ is a standard module, the restriction map $\Res_{P_n}$ is injective of on 
$W(\S_c(\pi),\psi)$, hence on $W(\pi,\psi)$.
\end{proof}

\subsection{The Cogdell--Piatetski-Shapiro interpretation of derivatives}\label{CPS interpretation}

We first recall two important results from \cite[Section 1]{CPS}. Then, thanks to those results and those of Section \ref{whittaker type}, we can interpret \cite[Proposition 9.1]{JPSS83} as the inclusion of the Whittaker model of a representation of Whittaker type $\pi_2$ in the appropriate derivative of the Whittaker model of $\pi_1\times \pi_2$, 
for $\pi_1$ another representation of Whittaker type. Notice that according to \cite[Proposition 3.2, (f)]{BZ77}, the map 
$\Phi^-$ sends the representation $\Ind_{N_n}^{P_n}(\psi)$ surjectively onto $\Ind_{N_{n-1}}^{P_{n-1}}(\psi)$. The first result from \cite{CPS} that we shall need 
is the following observation, which is an immediate consequence of the proof of \cite[Proposition 1.1]{CPS}.

\begin{prop}\label{Phi}
For $n\geqslant 3$, the map $\Phi^-:\Ind_{N_n}^{P_n}(\psi)\rightarrow \Ind_{N_{n-1}}^{P_{n-1}}(\psi)$ identifies with 
the twisted restriction map $\nu^{-\frac{1}{2}}\Res_{P_{n-1}}$, where $\Res_{P_{n-1}}$ is the retriction of functions to $P_{n-1}$. 
\end{prop}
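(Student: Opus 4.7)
The plan is to define the candidate restriction map directly, verify it factors through the $(U_n,\psi)$-coinvariants that compute $\Phi^-$, and then invoke \cite[Proposition 3.2(f)]{BZ77} to identify the resulting quotient. Recall from \cite{BZ77} that $\Phi^-(\tau)$ has underlying vector space $\tau_{U_n,\psi}$, the coinvariants under $U_n$ (the unipotent radical of the standard $(n-1,1)$-parabolic of $G_n$) with respect to the character $\psi|_{U_n}$ sending $u=\begin{pmatrix}I_{n-1}&v\\0&1\end{pmatrix}$ to $\psi(v_{n-1})$, and that the natural $P_{n-1}$-action on this quotient is further twisted by $\nu^{-1/2}$.

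Define
\[
\Sigma:\Ind_{N_n}^{P_n}(\psi)\to\Ind_{N_{n-1}}^{P_{n-1}}(\psi),\qquad \Sigma(f)(p)=\nu^{-1/2}(p)f(p),
\]
where $P_{n-1}\hookrightarrow P_n$ via $p\mapsto\diag(p,1)$; this is well-defined because $\nu$ is trivial on $N_{n-1}$. The crucial point is that $\Sigma$ annihilates the coinvariance relations. For $p\in P_{n-1}$ (embedded as $\diag(p,1)$) and $u\in U_n$ corresponding to $v\in F^{n-1}$, direct matrix multiplication shows that $pup^{-1}$ corresponds to $pv$; since $p$ has last row $(0,\dots,0,1)$, the last coordinate of $pv$ is $v_{n-1}$, so $\psi(pup^{-1})=\psi(u)$. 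Letting $\rho$ denote right translation,
\[
\Sigma(\rho(u)f)(p)=\nu^{-1/2}(p)f(pu)=\nu^{-1/2}(p)\psi(pup^{-1})f(p)=\psi(u)\Sigma(f)(p),
\]
so $\Sigma(\rho(u)f-\psi(u)f)=0$ and $\Sigma$ descends to $\bar\Sigma:\Phi^-\Ind_{N_n}^{P_n}(\psi)\to\Ind_{N_{n-1}}^{P_{n-1}}(\psi)$. A parallel short check confirms $\bar\Sigma$ is $P_{n-1}$-equivariant: the $\nu^{-1/2}(p_0)$-twist built into the $P_{n-1}$-action of $\Phi^-$ exactly absorbs the factor produced when translating $\Sigma(f)$ by $p_0\in P_{n-1}$ on the target side.

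Finally, \cite[Proposition 3.2(f)]{BZ77} already supplies an abstract $P_{n-1}$-equivariant surjection $\Phi^-\Ind_{N_n}^{P_n}(\psi)\twoheadrightarrow\Ind_{N_{n-1}}^{P_{n-1}}(\psi)$. Since $\bar\Sigma$ is a nonzero $P_{n-1}$-map with the same source and target, an explicit check on a compactly supported generator of $\Ind_{N_{n-1}}^{P_{n-1}}(\psi)$ identifies $\bar\Sigma$ with the canonical isomorphism. Consequently the natural quotient $\Phi^-$ coincides with $f\mapsto\nu^{-1/2}|_{P_{n-1}}\cdot f|_{P_{n-1}}$, i.e.\ with $\nu^{-1/2}\Res_{P_{n-1}}$, as claimed. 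The main obstacle is the meticulous bookkeeping of normalizations: the $\nu^{-1/2}$ built into the BZ definition of $\Phi^-$ must appear in precisely the correct place in the formula for $\Sigma$ so that $\bar\Sigma$ is $P_{n-1}$-equivariant; once this is correctly set up, the geometric computation $\psi(pup^{-1})=\psi(u)$ is what drives the whole argument.
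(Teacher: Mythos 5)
Your computation that $\Sigma(f)(p)=\nu^{-1/2}(p)f(p)$ lands in $\Ind_{N_{n-1}}^{P_{n-1}}(\psi)$, kills the relations $\rho(u)f-\psi(u)f$ (via $\psi(pup^{-1})=\psi(u)$ for $p\in P_{n-1}$), and that the descended map $\bar\Sigma$ is $P_{n-1}$-equivariant once the $\nu^{-1/2}$-twist in the definition of $\Phi^-$ is taken into account, is correct — but it is only the easy half of the statement. The paper itself gives no argument and simply quotes the proof of \cite[Proposition 1.1]{CPS}, and the substance of that proof is precisely the half you are missing: one must show that $\bar\Sigma$ is an \emph{isomorphism}, i.e.\ that $\ker(\nu^{-1/2}\Res_{P_{n-1}})$ is no larger than the span of the vectors $\rho(u)f-\psi(u)f$, and that the restriction map is surjective onto $\Ind_{N_{n-1}}^{P_{n-1}}(\psi)$. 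Your closing step does not establish this. The appeal to ``a nonzero $P_{n-1}$-map with the same source and target'' would only pin down $\bar\Sigma$ if the target were irreducible, but for $n\geqslant 3$ the module $\Ind_{N_{n-1}}^{P_{n-1}}(\psi)$ is reducible (it properly contains the compactly induced submodule $\operatorname{ind}_{N_{n-1}}^{P_{n-1}}(\psi)$), so it has no ``compactly supported generator,'' and a nonzero self-map of such a module can perfectly well have a kernel or fail to be onto. Worse, there is no independently specified ``canonical isomorphism'' to compare $\bar\Sigma$ with: \cite[Proposition 3.2(f)]{BZ77} only gives an abstract identification, and producing the concrete one is exactly what the proposition asserts, so the argument is circular at this point.

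To close the gap you need the standard criterion for twisted coinvariants (Jacquet--Langlands, or \cite[2.33]{BZ77}): $f$ lies in the span of the $\rho(u)g-\psi(u)g$ if and only if $\int_{U_0}\psi^{-1}(u)\rho(u)f\,du=0$ for some large enough compact open $U_0\subset U_n$. Writing $f\bigl(\diag(g,1)u_v\bigr)=\psi(\eta_{n-1}gv)\,f\bigl(\diag(g,1)\bigr)$ for $g\in G_{n-1}$ and $u_v\in U_n$, this integral equals $f\bigl(\diag(g,1)\bigr)$ times $\int_{U_0}\psi\bigl((\eta_{n-1}g-\eta_{n-1})v\bigr)dv$, which vanishes unless $\eta_{n-1}g$ lies in a small neighbourhood of $\eta_{n-1}$ (shrinking as $U_0$ grows); combined with the right-smoothness of $f$ and the fact that $\{g:\eta_{n-1}g=\eta_{n-1}\}=P_{n-1}$, this shows $\ker\Sigma$ is exactly the relation subspace. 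A similar local construction (extend $W'$ by $W\bigl(\diag(pk,1)\bigr)=W'(p)$ on $P_{n-1}K''$ and by $0$ elsewhere) gives surjectivity. Without these two steps the identification claimed in the proposition is not proved.
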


The second result is deeper. It is a consequence of the proof of \cite[Proposition 1.6]{CPS} (see \cite[Corollary 2.1]{M13} for the precise statement and its proof). 

\begin{prop} \label{Psi1}
Take $n\geqslant 2$, and let $\tau$ be a submodule of $\Ind_{N_n}^{P_n}(\psi)$ such that $\tau^{(1)}$ has a central character $c$. 
If $W\in \tau$, then for all $g\in G_{n-1}$, the following limit \[S(W)(g)=\lim_{z\in F^\times,\ z\to 0} \ c^{-1}(z)|z|^{\frac{(1-n)}{2}}\nu(g)^{-\frac{1}{2}}W\begin{pmatrix} zg & \\ & 1\end{pmatrix}\] 
exists (in fact the function of $z$ the limit of which is considered above is constant when $z$ tends to zero). The linear map 
$S:\tau\rightarrow \Ind_{N_{n-1}}^{G_{n-1}}(\psi)$ descends to $\tau^{(1)}$ (i.e. the kernel of $S$ contains that of $\Psi^-$, hence induces 
a linear map on $\tau^{(1)}$), and its descent $\overline{S}$ induces an isomorphism of $G_{n-1}$-modules 
between $\tau^{(1)}$ and $\overline{S}(\tau^{(1)})\subset \Ind_{N_{n-1}}^{G_{n-1}}(\psi)$.
\end{prop}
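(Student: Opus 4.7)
The plan is to establish the four properties of $S$ in turn: existence with stabilization of the defining limit, that $S$ lands in $\Ind_{N_{n-1}}^{G_{n-1}}(\psi)$ and is equivariant for the normalized $G_{n-1}$-action on $\Psi^-\tau$, vanishing of $S$ on $\tau(U_n)=\ker(\tau\to\tau^{(1)})$, and finally injectivity of the descended map $\overline{S}$. Writing $u_v=\begin{pmatrix}I_{n-1} & v \\ 0 & 1\end{pmatrix}\in U_n$, the computational engine is the matrix identity $\diag(h,1)\,u_v = u_{hv}\,\diag(h,1)$ for $h\in G_{n-1}$, $v\in F^{n-1}$, which, combined with left $(N_n,\psi)$-equivariance of $W\in\tau\subset\Ind_{N_n}^{P_n}(\psi)$, gives $W(\diag(h,1)u_v) = \psi((hv)_{n-1})W(\diag(h,1))$; symmetrically, the block embedding $n'\mapsto\diag(n',1)$ sends $N_{n-1}$ into $N_n$ preserving $\psi$.

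For existence and stabilization of the limit, the central character hypothesis on $\tau^{(1)}=\Psi^-\tau$, combined with the fact that $\Psi^-$ twists the coinvariant functor $(\cdot)_{U_n}$ by $\nu^{-1/2}$, means the action of $\diag(zI_{n-1},1)\in P_n$ on $\tau$ induces multiplication by $c(z)|z|^{(n-1)/2}$ on $\tau_{U_n}$. Evaluating at $\diag(g,1)$ translates this into
\[
W(\diag(zg,1)) - c(z)|z|^{(n-1)/2}\,W(\diag(g,1)) = W'_z(\diag(g,1))
\]
for some $W'_z\in\tau(U_n)$. The Bernstein--Zelevinsky description of $\tau$ by a finite filtration with successive quotients controlled by the derivatives $\tau^{(k)}$ forces the function $z\mapsto W(\diag(zg,1))$ to have, for $|z|$ small, an asymptotic expansion $\sum_i c_i(z)|z|^{\alpha_i}\cdot\mathrm{const}$ whose exponents $(c_i,\alpha_i)$ come from the central characters of the $\tau^{(k)}$. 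The hypothesis that $\tau^{(1)}$ has a single central character $c$, combined with smoothness of $W$ (which bounds the resolution in $z$) and discreteness of the possible exponents, collapses the expansion to the single term $c(z)|z|^{(n-1)/2}\cdot\nu(g)^{1/2} S(W)(g)$ once $|z|$ is small enough, giving both convergence and the stronger stabilization claim.

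The three structural properties are quick consequences. Smoothness of $S(W)$ is inherited from $W$; left $(N_{n-1},\psi)$-equivariance follows from the block-embedding identity, giving $W(\diag(zn'g,1)) = \psi(n')W(\diag(zg,1))$ for $n'\in N_{n-1}$; and the substitution $\diag(zgh,1)=\diag(zg,1)\diag(h,1)$ combined with $\nu(gh)=\nu(g)\nu(h)$ yields $S(W\cdot h)(g) = \nu(h)^{1/2} S(W)(gh)$, where $W\cdot h$ denotes right translation by $\diag(h,1)$; this is exactly the $G_{n-1}$-action induced by $\Psi^-$. That $\tau(U_n)\subseteq\ker(S)$ is immediate from the matrix identity: for $u_v\in U_n$,
\[
(W\cdot u_v - W)(\diag(zg,1)) = \bigl(\psi((zgv)_{n-1})-1\bigr)\,W(\diag(zg,1)),
\]
and the prefactor vanishes once $(zgv)_{n-1}\in\ker\psi$, i.e.\ for $|z|$ small enough depending on $v$ and $g$. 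Hence $S(W\cdot u_v - W)=0$ and $S$ factors through $\tau^{(1)}$.

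I expect the injectivity of $\overline{S}$ to be the main obstacle. If $S(W)$ is identically zero, the stabilization from the second paragraph forces $W(\diag(zg,1))=0$ for all $g\in G_{n-1}$ once $|z|$ is small enough depending on $g$, and the task is to convert this pointwise vanishing into the statement that $W$ lies in $\tau(U_n)$. Via the standard Jacquet-module characterization of coinvariants, this amounts to exhibiting a single compact open $V'\subseteq U_n\cong F^{n-1}$ with $\int_{V'} (W\cdot u_v)\,dv = 0$ as a function on all of $P_n$. Combining the pointwise vanishing of $W$ along the orbit $\{\diag(zg,1):|z|\ \mathrm{small}\}$ with the Fourier vanishing $\int_{V'}\psi((hv)_{n-1})\,dv = 0$ whenever the character $v\mapsto\psi((hv)_{n-1})$ is nontrivial on $V'$, and coordinating these two regimes uniformly in $h\in G_{n-1}$ for a single $V'$, is the delicate content of \cite[Proposition 1.6]{CPS} and its reformulation in \cite[Corollary 2.1]{M13}.
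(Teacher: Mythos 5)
The paper does not actually prove Proposition \ref{Psi1}: it is quoted as a consequence of the proof of \cite[Proposition 1.6]{CPS}, with \cite[Corollary 2.1]{M13} cited for the precise statement and its proof. So your proposal has to be measured against that external argument. Your structural bookkeeping is correct: the identity $\diag(h,1)u_v=u_{hv}\diag(h,1)$, the left $(N_{n-1},\psi)$-equivariance and $G_{n-1}$-equivariance of $S$ (with the $\nu^{1/2}$ matching the normalization of $\Psi^-$), and the verification that $S$ kills $\tau(U_n)=\ker(\tau\to\tau^{(1)})$ are all fine and are indeed the easy parts.

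The gap is in the two load-bearing claims. For the existence and stabilization of the limit, the appeal to a Bernstein--Zelevinsky asymptotic expansion is not an argument here: $\tau$ is an arbitrary submodule of $\Ind_{N_n}^{P_n}(\psi)$, with no admissibility or finite-length hypothesis, so there is no a priori finite expansion of $z\mapsto W(\diag(zg,1))$; and ``smoothness of $W$ bounds the resolution in $z$'' is not meaningful, since $\diag(zg,1)=\diag(g,1)\diag(zI_{n-1},1)$ and the translating elements $\diag(zI_{n-1},1)$ leave every compact subset of $P_n$ as $z\to0$, so right-smoothness gives no control in this limit. What is actually needed is this: for each fixed $a$ with $|a|\leqslant 1$, the element $\tau(\diag(aI_{n-1},1))W-c(a)|a|^{(n-1)/2}W$ lies in $\tau(U_n)$ and therefore vanishes at $\diag(t,1)$ once $\eta_{n-1}t$ is close enough to $0$ --- but that threshold depends on $a$, and the entire content of the ``claim'' in the proof of \cite[Proposition 1.6]{CPS} (restated in \cite{M11}) is to produce a single threshold $N$, depending only on $W$, valid for all $|a|\leqslant 1$ and uniformly in the remaining torus coordinates. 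You never address this uniformity. Relatedly, you explicitly leave the injectivity of $\overline{S}$ to the references; but once the uniform claim is in hand, injectivity is not a separate difficulty: $S(W)=0$ together with the claim forces $W(\diag(h,1))=0$ whenever $\eta_{n-1}h$ lies in a fixed neighbourhood of $0$ (reduce to finitely many $K_{n-1}$-cosets via Iwasawa and smoothness), and by the characterization of $\tau(U_n)$ that you correctly recall this is exactly $W\in\tau(U_n)$. So there is really one missing ingredient, the uniform stabilization claim; as written, your text is a reduction of the proposition to \cite[Proposition 1.6]{CPS} and \cite[Corollary 2.1]{M13} --- which is also what the paper does --- rather than a proof of it.
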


In fact, we shall use the above result later. For the moment we rather need the following, which is part of the proof of 
\cite[Corollary 2.1]{M13}, and is a kind of converse to Proposition \ref{Psi1}.

\begin{prop} \label{Psi2}
Let $c$ be a character of $Z_{n-1}$ with $n\geqslant 2$. Define $\Ind_{N_n}^{P_n}(\psi)_c\subset \Ind_{N_n}^{P_n}(\psi)$ to be the $P_n$-submodule of $\Ind_{N_n}^{P_n}(\psi)$ the elements of which are the functions $W$ such that for all $g\in G_{n-1}$, the quantity 
\[f_{c,W,g}(z)=c^{-1}(z)|z|^{\frac{(1-n)}{2}}\nu(g)^{-\frac{1}{2}}W \begin{pmatrix} zg & \\ & 1\end{pmatrix}\] becomes constant for $z$ in a punctured neighbourhood of zero in $F^\times$. 
The map \[S:\tau\rightarrow \Ind_{N_{n-1}}^{G_{n-1}}(\psi)\] defined by 
$S(W)(g)=\underset{z\in F^\times,\ z \to 0}{\lim}\  f_{c,W,g}(z)$ descends to $(\Ind_{N_n}^{P_n}(\psi)_c)^{(1)}$, and its descent $\overline{S}$ induces an isomorphism of $G_{n-1}$-modules 
between $(\Ind_{N_n}^{P_n}(\psi)_c)^{(1)}$ and 
\[S(\Ind_{N_n}^{P_n}(\psi)_c)=\overline{S}((\Ind_{N_n}^{P_n}(\psi)_c)^{(1)})\subset \Ind_{N_{n-1}}^{G_{n-1}}(\psi).\]
\end{prop}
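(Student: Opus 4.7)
Set $\tau=\Ind_{N_n}^{P_n}(\psi)_c$. The plan is to reduce the statement to Proposition \ref{Psi1}, of which it is the converse, by establishing two points: (a) $\tau$ is a $P_n$-submodule of $\Ind_{N_n}^{P_n}(\psi)$; (b) $\tau^{(1)}$ admits the central character $c$. Once (a) and (b) hold, Proposition \ref{Psi1} applied to $\tau$ produces a linear map defined by the very same limit formula as $S$ here; that map descends to $\tau^{(1)}$ with kernel exactly $\ker\Psi^-$, and the resulting $\overline{S}$ is a $G_{n-1}$-equivariant isomorphism onto $\overline{S}(\tau^{(1)})=S(\tau)$, which is precisely the conclusion sought.

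For (a), I would decompose $p\in P_n$ as $p=g(p)u(p)$ with $g(p)\in G_{n-1}$ and $u(p)\in U_n$, and write
\[
\begin{pmatrix}zg&\\&1\end{pmatrix}p=u''(z,g,p)\begin{pmatrix}z\,g\,g(p)&\\&1\end{pmatrix},
\]
where $u''(z,g,p)\in U_n$ is the conjugate of $u(p)$ by $\diag(zgg(p),1)$. A direct computation shows $\psi(u''(z,g,p))=\psi(z\alpha)$ for some $\alpha\in F$ independent of $z$, and so equals $1$ for $z$ sufficiently close to $0$. Combining this with the left $\psi$-equivariance of $W$ under $N_n$ yields
\[
f_{c,\,p\cdot W,\,g}(z)=\nu(g(p))^{1/2}\,\psi(u''(z,g,p))\,f_{c,\,W,\,g\,g(p)}(z),
\]
whose right-hand side is eventually constant in $z$ whenever $W\in\tau$; hence $p\cdot W\in\tau$.

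For (b), I would specialize the computation of (a) to the central element $\diag(z'I_{n-1},1)\in P_n$, for which $g(p)=z'I_{n-1}$ and $u(p)$ is trivial, and read off the character by which the scalar acts on the coinvariants defining $\tau^{(1)}$. The prefactor $c^{-1}(z)|z|^{(1-n)/2}\nu(g)^{-1/2}$ in the definition of $f_{c,W,g}(z)$ has been calibrated so that the normalization twist $\nu^{-1/2}$ built into $\Psi^-$ exactly balances the $|z|^{(1-n)/2}$ factor, leaving $c(z')$ as the scalar action on $\tau^{(1)}$. This central-character verification is the main obstacle, since one must check that the eventual-constancy condition defining $\tau$ is genuinely strong enough to forbid any further $z$-dependent contribution to the scalar action, and that the three normalizations (the prefactor $c^{-1}(z)$, the twist in $\Psi^-$, and the modulus factor $|z|^{(1-n)/2}$) match exactly. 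Once (a) and (b) are in place, the descent of $S$, the injectivity of $\overline{S}$, and the isomorphism onto the image all follow immediately from Proposition \ref{Psi1}, since our limit formula agrees on $\tau$ with the one constructed there.
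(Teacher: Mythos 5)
Your step (a) is essentially correct: the identity $\diag(zg,1)\,p=u''(z,g,p)\,\diag(z\,g\,g(p),1)$ with $u''\in U_n\subset N_n$ and $\psi(u'')=\psi(z\alpha)$ eventually equal to $1$ does show that $\Ind_{N_n}^{P_n}(\psi)_c$ is $P_n$-stable; and the same computation, specialized to $p=u(y)\in U_n$, gives $f_{c,\tau(u(y))W,g}(z)=\psi(z\,\eta_{n-1}gy)f_{c,W,g}(z)$, which shows directly that $S(\tau(u)W)=S(W)$, hence that $S$ kills $\tau(U_n,1)=\ker(\tau\to\Psi^-\tau)$ and descends equivariantly to $\tau^{(1)}$. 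So the descent and equivariance need no appeal to Proposition \ref{Psi1} at all. (Note the paper gives no proof of this statement; it is imported from the proof of \cite[Corollary 2.1]{M13}, so there is nothing internal to compare against.)

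The genuine gap is in step (b), and it is not a bookkeeping issue about normalizations. Specializing (a) to $\diag(z'I_{n-1},1)$ yields $f_{c,\,\diag(z'I_{n-1},1)W,\,g}(z)=c(z')|z'|^{\frac{n-1}{2}}f_{c,W,g}(zz')$, hence $S(\diag(z'I_{n-1},1)W)=c(z')|z'|^{\frac{n-1}{2}}S(W)$. This identifies the central character of the \emph{image} $S(\tau)$, not of $\tau^{(1)}=\nu^{-1/2}\otimes\tau_{U_n,1}$; the two agree precisely when $\overline{S}$ is injective, which is the very assertion you want to extract from Proposition \ref{Psi1}. So your reduction is circular: hypothesis (b) of Proposition \ref{Psi1} is logically equivalent to the injectivity you are trying to deduce from it. To prove (b) directly you must show that $V=\diag(z'I_{n-1},1)W-c(z')|z'|^{\frac{n-1}{2}}W$ lies in $\tau(U_n,1)$, and by the standard criterion ($\int_{U_0}\tau(u)V\,du=0$ for some compact open $U_0\subset U_n$) this amounts to the vanishing of $V(\diag(g,1))$ for \emph{all} $g$ with $\eta_{n-1}g$ in one fixed lattice $L$. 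That is a condition uniform in $g$, whereas the defining property of $\Ind_{N_n}^{P_n}(\psi)_c$ only gives, for each $g$ separately, constancy of $f_{c,W,g}$ on a punctured neighbourhood of $0$ whose radius may depend on $g$; and the set $\{g:\eta_{n-1}g\in L\}$ is not compact modulo $N_{n-1}$ and the stabilizer of $W$, so no finite covering argument closes the gap. This pointwise-to-uniform passage is the entire analytic content of the proposition, and it is not clear it can be won from the raw definition alone: one needs additional input, such as the support properties of smooth vectors of $\Ind_{N_n}^{P_n}(\psi)$ on the diagonal torus, or the fact that in the actual application (Proposition \ref{inclusion of whittaker model}) the relevant function lies in a finite length submodule such as $(\Phi^-)^{n_1-1}W(\pi_1\times\pi_2,\psi)$, where the uniformity can be extracted as in the claim of \cite[Proposition 1.6]{CPS}. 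As written, your proposal names the obstacle but does not overcome it.
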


Notice that by \cite[Proposition 4.13, (a) and (c)]{BZ77} 
(which is more precise than the more commonly used \cite[Corollary 4.14]{BZ77} describing the subquotients of the Bernstein-Zelevinsky filtration), if $\pi_1$ and $\pi_2$ are representations of $G_{n_1}$ and $G_{n_2}$ such that $\pi_1^{(n_1)}\simeq \C$, then 
$\pi_2\subset (\pi_1\times \pi_2)^{(n_1)}$. Here we give another result of this type, which by \cite{JS83} is equivalent to it if $\pi_1$ and $\pi_1\times \pi_2$ are standard modules. It is essentially a reformulation of 
\cite[Proposition 9.1]{JPSS83} using the above interpretation of derivatives.

\begin{prop}\label{inclusion of whittaker model}
Let $\pi_1$ and $\pi_2$ be representations in $\rep_\whitt(n_1)$ and $\rep_\whitt(n_2)$ respectively (with $n_1,n_2\geqslant 1$), then 
\[W(\pi_2,\psi)\subset W(\pi_1\times \pi_2,\psi)^{(n_1)}.\]
\end{prop}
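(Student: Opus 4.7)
The plan is to realize the inclusion as a direct translation of \cite[Proposition 9.1]{JPSS83} through the dictionary between Bernstein--Zelevinsky derivatives and restriction/limit maps provided by Propositions \ref{Phi} and \ref{Psi1}. Concretely, for a given $W_2 \in W(\pi_2,\psi)$, I will construct an explicit lift $W \in W(\pi_1\times\pi_2,\psi)$ whose iterated $n_1$-th derivative, once embedded back into $\Ind_{N_{n_2}}^{G_{n_2}}(\psi)$ via $\overline{S}$, coincides with $W_2$ up to a nonzero scalar.

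First, I invoke \cite[Proposition 9.1]{JPSS83} which, exploiting the fact that $\pi_1^{(n_1)}\simeq \C$ (since $\pi_1$ is of Whittaker type), produces $W\in W(\pi_1\times\pi_2,\psi)$ satisfying
\[
W\begin{pmatrix} g & 0 \\ 0 & I_{n_1}\end{pmatrix} = \nu(g)^{n_1/2}\,W_2(g)\,\phi(\eta_{n_2}g),\qquad g\in G_{n_2},
\]
for a Schwartz function $\phi\in\mathcal{C}_c^\infty(F^{n_2})$ with $\phi(0)\neq 0$ and support small enough for $\phi$ to be constant near $0$. By Proposition \ref{kirillov}, restriction to $P_{n_1+n_2}$ is injective on $W(\pi_1\times\pi_2,\psi)$, so I may identify $W$ with the element $\tau = W|_{P_{n_1+n_2}} \in \Ind_{N_{n_1+n_2}}^{P_{n_1+n_2}}(\psi)$.

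Next, I identify the $n_1$-th derivative concretely. By Proposition \ref{Phi} applied $n_1-1$ times, the composition $(\Phi^-)^{n_1-1}$ acts on $\tau$ as $\nu^{-(n_1-1)/2}\,\Res_{P_{n_2+1}}$. Restricting to the component with central character $c$ equal to that of $\pi_2$, Proposition \ref{Psi1} identifies the final $\Psi^-$ with the limit
\[
S(\widetilde W)(g)=\lim_{z\to 0}\,c^{-1}(z)\,|z|^{-n_2/2}\,\nu(g)^{-1/2}\,\widetilde W\begin{pmatrix} zg & 0\\ 0 & 1\end{pmatrix},\qquad g\in G_{n_2}.
\]
Composing, the embedding $\overline{S}$ of $W(\pi_1\times\pi_2,\psi)^{(n_1)}$ into $\Ind_{N_{n_2}}^{G_{n_2}}(\psi)$ becomes a single explicit limit. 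The crucial observation is that the embedding $P_{n_2+1}\hookrightarrow P_{n_1+n_2}$ sends $\begin{pmatrix} zg & 0\\ 0 & 1\end{pmatrix}$ to $\begin{pmatrix} zg & 0 \\ 0 & I_{n_1}\end{pmatrix}$, which is exactly where the JPSS formula applies. Using $W_2(zg) = c(z)W_2(g)$ together with $\nu(zg)^{n_1/2} = |z|^{n_1 n_2/2}\nu(g)^{n_1/2}$, a direct check shows that every power of $|z|$, of $\nu(g)$ and every factor of $c(z)$ cancels, leaving
\[
\overline{S}((\Phi^-)^{n_1-1}W)(g) \;=\; \lim_{z\to 0} W_2(g)\,\phi(z\eta_{n_2}g) \;=\; \phi(0)\,W_2(g).
\]
Since $\phi(0)\neq 0$, this shows $W_2$ lies in the image of the $n_1$-th derivative embedding, establishing the inclusion.

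The main technical subtlety I expect is the bookkeeping around central characters: Proposition \ref{Psi1} requires the relevant submodule to admit a central character $c$, which is not automatic for $W(\pi_1\times\pi_2,\psi)^{(n_1)}$ as a whole. This is handled by restricting to the $c$-isotypic component (with $c$ the central character of $\pi_2$), into which the lift $W$ produced above naturally falls; the limit formula then applies verbatim. The other potentially delicate point is verifying the exact cancellation of modulus factors, but this amounts to the tautological identity $-n_2/2 - n_2(n_1-1)/2 + n_1 n_2/2 = 0$ together with its analogue for $\nu(g)$.
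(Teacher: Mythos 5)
Your proposal follows essentially the same route as the paper: lift $W_2$ via \cite[Proposition 9.1]{JPSS83}, unwind $(\Phi^-)^{n_1-1}$ as the twisted restriction $\nu^{-(n_1-1)/2}\Res_{P_{n_2+1}}$ using Proposition \ref{Phi}, and check that the normalized limit defining $S$ exists and equals $\phi(0)W_2$; your exponent bookkeeping is correct and matches the paper's computation. The one step that needs repair is your handling of the central character hypothesis. You invoke Proposition \ref{Psi1}, which requires that the \emph{first derivative} of the ambient $P_{n_2+1}$-submodule have a central character, and you propose to fix the fact that this is not automatic by ``restricting to the $c$-isotypic component''; but $Z_{n_2}$ is not central in $P_{n_2+1}$, so the $c$-isotypic (or generalized eigen-) decomposition is not a decomposition into $P_{n_2+1}$-submodules, and it is not clear what module you would then feed into Proposition \ref{Psi1}. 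The paper's device is Proposition \ref{Psi2}: one defines the $P_{n_2+1}$-submodule $\Ind_{N_{n_2+1}}^{P_{n_2+1}}(\psi)_c$ of \emph{all} functions whose normalized restriction $f_{c,W,g}(z)$ is eventually constant near $z=0$, checks (exactly the computation you perform, using $W_2(zg)=c(z)W_2(g)$ and the constancy of $\phi$ near $0$) that your $W'$ lies in it, and then uses the isomorphism $\overline{S}$ of Proposition \ref{Psi2} to conclude $W_2\in W(\pi_1\times\pi_2,\psi)^{(n_1)}$. So your argument becomes complete once you replace the appeal to Proposition \ref{Psi1} plus isotypic components by an appeal to Proposition \ref{Psi2}; everything else you wrote is the verification that its hypotheses hold.
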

\begin{proof}
Set $n=n_1+n_2$, the representation $\pi_1\times \pi_2$ is of Whittaker type, and we denote by $W(\pi_1\times \pi_2,\psi)\mid_{P_n}$ the representation
$W(\pi_1\times \pi_2,\psi)$ considered as a $P_n$-module. By the second part of Proposition \ref{kirillov}, one has \[W(\pi_1\times \pi_2,\psi)\mid_{P_n}\simeq \Res_{P_n}(W(\pi_1\times \pi_2,\psi))\subset \Ind_{N_n}^{P_n}(\psi).\]
Take $W_2\in W(\pi_2,\psi)$. By \cite[Proposition 9.1]{JPSS83}, for any $\phi\in \mathcal{C}^\infty_c(F^{n_2})$, there is $W\in W(\pi,\psi)$ such that, for any $p\in P_{n_2+1}$, we have
\[W\begin{pmatrix}p & \\ & I_{n_1-1} \end{pmatrix}=W_2(g(p))\nu(g(p))^{\frac{n_1}{2}}\psi(u(p))\phi(\eta_{n_2}g(p)).\]
By Proposition \ref{Phi}, this means that the map 
\[W':p\mapsto W_2(g(p))\nu(g(p))^{\frac{1}{2}}\psi(u(p))\phi(\eta_{n_2}g(p))\] belongs to the space
\[(\Phi^-)^{n_1-1}W(\pi_1\times \pi_2,\psi)\subset \Ind_{N_{n_2+1}}^{P_{n_2+1}}(\psi).\] 
Notice that by the first part of Proposition \ref{kirillov}, the representation $W(\pi_2,\psi)$ has a central character $c$. 
If we choose $\phi$ to be equal to $1$ in a neighbourhood of $0$ (whatever this neighbourhood is), 
then $W'$ belongs to $\Ind_{N_{n_2+1}}^{P_{n_2+1}}(\psi)_c$: indeed for $g\in G_{n_2}$, and $z\in F^\times$, one has 
\[W'\begin{pmatrix}zg & \\ & 1\end{pmatrix}=W_2(zg)\nu(zg)^{\frac{1}{2}}\phi(\eta_{n_2}zg)= c(z)|z|^{\frac{n_2}{2}} W_2(g)\nu(g)^{\frac{1}{2}}\phi(\eta_{n_2}zg),\] and the assertion follows by definition of 
$\Ind_{N_{n_2+1}}^{P_{n_2+1}}(\psi)_c$ thanks to our hypothesis on $\phi$. Moreover, by definition of $S$, one has $S(W')=W_2$. According to Proposition 
\ref{Psi2}, this means that $W_2\in W(\pi_1\times \pi_2,\psi)^{(n_1)}$. 
\end{proof}

\subsection{Extension of Whittaker functions}\label{extension of Whittaker}

In this section we prove one of the main results of the paper, which is simultaneously a generalization of one part of 
\cite[Corollary of Proposition 1.7]{CPS} and of \cite[Proposition 9.1]{JPSS83}. Both these technical results have proved very useful in the study of Rankin-Selberg $L$-factors. Our generalization will be used in Section \ref{general case} to prove the existence of test vectors for the $L$-factors that we are interested in.  

We recall 
that if $\tau$ is a $P_{r+1}$-submodule of $\Ind_{N_{r+1}}^{P_{r+1}}(\psi)$ with $r\geqslant 1$, such that $\tau^{(1)}$ has a central character, we defined in Proposition \ref{Psi1} a map $S$ from $\tau$ to $\Ind_{N_r}^{G_r}(\psi)$, inducing an injection $\overline{S}$ of $\tau^{(1)}$ in $\Ind_{N_r}^{G_r}(\psi)$. We set 
\[W(\tau^{(1)},\psi)=S(\tau)=\overline{S}(\tau^{(1)}).\] This is consistent with our previous notations, as when $\tau^{(1)}$ is of Whittaker type, $W(\tau^{(1)},\psi)$ is indeed the Whittaker model of 
$\tau^{(1)}$. More generally, for $1\leqslant k \leqslant r$, then $(\Phi^{-})^{k-1}\tau$ is naturally a subspace of 
$\Ind_{N_{r+2-k}}^{P_{r+2-k}}(\psi)$ thanks to Proposition \ref{Phi}, and if $\tau^{(k)}$ has a central character, then 
$\overline{S}$ induces an embedding from $\tau^{(k)}= \Psi^-(\Phi^{-})^{k-1}\tau$ into $\Ind_{N_{r+1-k}}^{G_{r+1-k}}(\psi)$, and we set 
\[W(\tau^{(k)},\psi)=S((\Phi^{-})^{k-1}\tau)=\overline{S}(\tau^{(k)}).\]

\begin{prop}\label{first step}
Let $\tau$ be a $P_{r+1}$-submodule of $\Ind_{N_{r+1}}^{P_{r+1}}(\psi)$ with $r\geqslant 1$, such that $\tau^{(1)}$ has a central character. Then for any $W_0\in W(\tau^{(1)},\psi)$, and any $\phi\in \mathcal{C}_c^\infty(F^r)$, there is $W\in \tau$ such that, for any $g\in G_r$, 
\[W\begin{pmatrix} g & \\ & 1 \end{pmatrix}\phi(\eta_r g)=\d_{P_{r+1}}^{\frac{1}{2}} (g)W_0(g)\phi(\eta_r g).\]
\end{prop}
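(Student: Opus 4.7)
Plan. Pick $W_1 \in \tau$ such that $S(W_1) = W_0$; this exists by definition of $W(\tau^{(1)},\psi) = S(\tau)$. The idea is to construct $W$ by averaging $W_1$ against two compactly-supported test functions: $b \in \mathcal{C}_c^\infty(F^r)$ producing a Fourier factor aligned with $\phi$, and $a \in \mathcal{C}_c^\infty(F^\times)$ concentrating the integrand in a neighbourhood of $z = 0$ where the asymptotic formula of Proposition \ref{Psi1} kicks in. Concretely, set
\[
W(p) := \int_{F^\times}\int_{F^r} W_1\bigl(p \cdot u(y)\cdot \diag(zI_r, 1)\bigr)\, a(z)\, b(y)\, dy\, dz;
\]
being a smooth superposition of right $P_{r+1}$-translates of $W_1$, this $W$ lies in $\tau$.

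A direct calculation using $\diag(g,1) u(y) \diag(zI_r, 1) = u(gy) \diag(zg, 1)$ and the Whittaker equivariance $W_1(u(gy) h) = \psi(\eta_r g \cdot y) W_1(h)$ yields the factorisation
\[
W(\diag(g,1)) = \hat{b}(\eta_r g) \cdot \int_{F^\times} a(z)\, W_1(\diag(zg, 1))\, dz, \qquad \hat{b}(\xi) := \int_{F^r} b(y)\, \psi(\xi\cdot y)\, dy.
\]
By Proposition \ref{Psi1}, for each $g$ there exists $\epsilon(g) > 0$ with $W_1(\diag(zg, 1)) = c(z)\, |z|^{r/2}\, \nu(g)^{1/2}\, W_0(g)$ whenever $|z| < \epsilon(g)$. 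I would then choose $b$ so that $\hat{b} \equiv 1$ on the compact set $\supp\phi$ (possible since $\mathcal{C}_c^\infty(F^r)$ coincides with the Bruhat–Schwartz class, which is Fourier-stable), and $a$ supported in a small enough neighbourhood of zero with $\int a(z) c(z) |z|^{r/2}\, dz = 1$. If $\supp a \subset \{|z| < \epsilon(g)\}$ for every $g \in \eta_r^{-1}(\supp\phi)$, the inner integral collapses to $\nu(g)^{1/2} W_0(g)$ on that set, and the factorisation gives $W(\diag(g,1)) = \delta_{P_{r+1}}^{1/2}(g)\, W_0(g)$ there. Multiplying through by $\phi(\eta_r g)$ yields the required identity, which is trivially true off $\eta_r^{-1}(\supp\phi)$.

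The hard part is the uniform-threshold statement: that a single $\epsilon > 0$ works for all $g \in \eta_r^{-1}(\supp\phi)$. Smoothness of $W_1$ (right-invariance under some compact open $K \subset G_{r+1}$) makes $\epsilon(g)$ locally constant in $g$, and the compact $K$-orbit decomposition of $\supp\phi \cap (F^r \setminus \{0\})$, combined with the bijection $P_r \backslash G_r \simeq F^r \setminus \{0\}$, reduces matters to finitely many double cosets in $P_r \backslash \eta_r^{-1}(\supp\phi)/K$ --- at least when $\supp\phi$ is bounded away from the origin. The remaining subtleties are to control $\epsilon(pg)$ under the left $P_r$-action (which preserves $\eta_r g$ but not $W_1(\diag(\cdot,1))$), and to accommodate the case $\phi(0) \neq 0$, where $\supp\phi$ meets infinitely many valuation shells near $0$; I would split $\phi$ into near-zero and away-from-zero parts, handling the former by a preliminary central rescaling that reduces it to the latter.
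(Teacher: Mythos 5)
Your construction is essentially sound and close in spirit to the paper's: you pick $W_1$ with $S(W_1)=W_0$ and push it into the region where the asymptotic of Proposition \ref{Psi1} holds, the paper by a single translation by $z_b=\diag(b,\dots,b,1)$ followed by renormalisation, you by smoothing against $a\in\mathcal{C}_c^\infty(F^\times)$ concentrated near $0$ --- a cosmetic difference. (Your $b$-average is correct but is really doing the job of Corollary \ref{second step}: since the asserted identity carries $\phi(\eta_r g)$ on both sides, nothing is claimed off $\eta_r^{-1}(\supp \phi)$ and the Fourier factor is not needed at this stage.) The problem is the step you yourself isolate as "the hard part": you need $\inf\{\epsilon(g):\eta_r g\in\supp\phi\}>0$, and the argument you sketch for it would fail. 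Local constancy of $\epsilon$ comes from right $K$-invariance, but the set $\eta_r^{-1}(\supp\phi)$ is non-compact: each fibre $\{g:\eta_r g=v\}$ is a full coset of $P_r$, and $\epsilon$ has no invariance under left translation by $P_r$ (only left $N_r$-translation interacts with $W_1$, via $\psi$). So even after reducing to finitely many double cosets in $P_r\backslash\eta_r^{-1}(\supp\phi)/K$, you have gained nothing: the unbounded directions live \emph{inside} a single double coset, namely the torus coordinates $t_1,\dots,t_{r-1}$ in an Iwasawa decomposition $g=ntk$. (Your other worry, about $\phi(0)\neq 0$, is a red herring: since $\eta_r g=t_r\,\eta_rk$ with $k\in K_r$, the threshold scales like $\epsilon(g)\asymp 1/|t_r|$ and only \emph{improves} as $\eta_r g\to 0$; the asymptotic region is a large ball $\{|t_r|\leqslant R\}$, not a set bounded away from the origin.)

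What closes the gap is not a compactness argument but a genuine structural fact about such $\tau$, namely the Claim in the proof of \cite[Proposition 1.6]{CPS} (see also Theorem 2.1 of \cite{M11}), which the paper invokes: for each of the finitely many translates $\tau(k_i)W_1$, $k_i\in K_r/U$, there is a single $N$ with
\[
\tau(k_i)W_1\begin{pmatrix} ta & \\ & 1\end{pmatrix}=c(a)\nu(a)^{\frac{1}{2}}\,\tau(k_i)W_1\begin{pmatrix} t & \\ & 1\end{pmatrix}
\]
for \emph{all} $t_1,\dots,t_{r-1}\in F^\times$, all $|a|\leqslant 1$ and all $|t_r|\leqslant q^{-N}$; that is, the stabilisation threshold is uniform over the whole torus once the last coordinate is small. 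This is where the hypothesis that $\tau^{(1)}$ has a central character and the Bernstein--Zelevinsky filtration of $\tau|_{P_{r+1}}$ actually enter; it is not a soft consequence of smoothness. Granting it, $|t_r|$ is bounded above on $\eta_r^{-1}(\supp\phi)$, hence $\epsilon$ is bounded below there, and your choice of $a$ works. As written, though, the one hard estimate in the proof is asserted rather than proved, and the substitute argument offered for it does not go through.
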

\begin{proof}
The proof is inspired by the proof of \cite[Proposition 3.2]{M13}. We denote by $c$ the central character 
of $\tau^{(1)}$ and set $\rho=\tau^{(1)}$. Take $W_1\in \tau$ such that $S(W_1)=W_0$, in particular $S(\tau(g)W_1)=\rho(g)W_0$ for all $g$ in $G_r$ (notice that both $\rho$ and $\tau$ act by right translation). For 
$t_1,\dots,t_r \in F^\times$, we will denote by $t$ the element 
\[t=\begin{pmatrix} t_1\dots t_r  &  &  &  \\
 & t_2\dots t_r  &  &  \\ &  & \ddots & \\ &  &  &  t_r \end{pmatrix}\in G_r,\] and set 
\[t'=t_r^{-1}t \in G_r.\] We 
choose representatives $k_1,\dots,k_l$ of $K_r/U$ with $k_1=1$, where $U$ is a compact open subgroup of $K_r$ fixing 
$W_1$ on the right. By the claim in Theorem 2.1 of \cite{M11} (or the arguments in 
the claim of Proposition 1.6 of \cite{CPS}), there is $N_i\in \N$ such that 
\[\tau(k_i)W_1\begin{pmatrix} ta & \\ & 1 \end{pmatrix}=c(a)\nu(a)^{\frac{1}{2}}\tau(k_i)W_1\begin{pmatrix} t & \\ & 1 \end{pmatrix}\] for
any $t_1,\dots, t_{r-1} \in F^\times$, $a\in F^\times$ such that $|a|\leqslant 1$ and $t_r\in F^\times$ 
such that $|t_r| \leqslant q^{-N_i}$. We can choose all $N_i$'s independent of $i$, say equal to an integer $N$, as there is a finite number of them. We set 
$z_b=\diag(b,\dots,b,1)\in P_{r+1}$ and
\[W_b=\frac{\tau(z_b)W_1}{c(b)|b|^{\frac{r}{2}}}.\] 
Notice that for all $i=1,\dots,l$, the functions \[\frac{(\tau(k_i)W_b)\begin{pmatrix} t & \\ & 1 \end{pmatrix}}{c(t_r)|t_r|^{\frac{r}{2}}}
=\frac{\tau(k_i)W_1\begin{pmatrix} t't_rb & \\ & 1 \end{pmatrix}}{c(t_rb)|t_rb|^{\frac{r}{2}}} \] 
are constant with respect to $t_r$ for $|t_rb|\leqslant 1 \Leftrightarrow |t_r|\leqslant q^{-N}/|b|$. 
We write \[\frac{1}{c(t_r)|t_r|^{\frac{r}{2}}}\tau(k_i)W_b\begin{pmatrix} t & \\ & 1 \end{pmatrix}
=  \frac{\nu(t')^{1/2}}{c(t_rb)|t_rb|^{\frac{r}{2}}\nu(t')^{1/2}}\tau(k_i)W_1\begin{pmatrix} t't_rb & \\ & 1 \end{pmatrix}\] 
and observe that $S(\tau(k_i)W_1)=\rho(k_i)W_0.$ Hence by Proposition \ref{Psi1}, making $z=t_rb\rightarrow 0$ we get:
\begin{align*}\frac{(\tau(k_i)W_b)\begin{pmatrix} t & \\ & 1 \end{pmatrix}}{c(t_r)|t_r|^{\frac{r}{2}}}&
= \nu(t')^{1/2}\rho(k_i)W_0(t'),\end{align*} this equality being valid for all 
$t_r$ such that $|t_r|\leqslant q^{-N}/|b|$. So for such $t_r$:
\[(\tau(k_i)W_b)\begin{pmatrix} t & \\ & 1 \end{pmatrix}=|t_r|^{\frac{r}{2}}\nu(t')^{1/2}c(t_r)\rho(k_i)W_0(t')
=\nu(t)^{1/2}\rho(k_i)W_0(t).\]
By the Iwasawa decomposition of $G_r$, we deduce that \[W_b\begin{pmatrix}g & \\ & 1 \end{pmatrix}=\nu(g)^{1/2}W_0(g)\] for all 
$g\in G_r$ such that $\nu(z(g))^{1/r}\leqslant q^{-N}/|b| $. But we can take $b$ such that $q^{-N}/|b|$ is as large as we need, i.e. 
such that $\eta_r g$ belongs to $(\p^{-l})^r$ for $l$ as large as we want. In particular for $\phi\in \mathcal{C}_c^\infty(F^r)$, one can take $l$ large enough for $(\p^{-l})^r$ to contain the support of $\phi$, and this proves the claim. 
\end{proof}

After fixing a Haar measure $dx$ on $F^r$, we denote by $\widehat{\phi}$ the Fourier transform 
of $\phi\in \mathcal{C}_c^\infty(F^r)$ with respect to $dx$ and the character $\psi\otimes \dots \otimes \psi$ of $F^n$.

\begin{corollary}\label{second step}
With same notations as in the Proposition \ref{first step}, for any $W_0\in W(\tau^{(1)},\psi)$, and any $\phi\in \mathcal{C}_c^\infty(F^r)$, there is $W'\in\tau$ such that, for all $g\in G_r$, 
\[W'\begin{pmatrix} g & \\ & 1 \end{pmatrix}=\nu(g)^{\frac{1}{2}}W_0(g)\phi(\eta_r g).\]
\end{corollary}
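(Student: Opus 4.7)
The plan is to promote the identity of Proposition \ref{first step}, which holds only after multiplication by an auxiliary cutoff, to the desired pointwise identity by averaging over the abelian unipotent $U = \{u_v := \begin{pmatrix} I_r & v \\ & 1 \end{pmatrix} : v \in F^r\} \subset P_{r+1}$ and invoking Fourier inversion on $F^r$.

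First, I would record the commutation identity $\begin{pmatrix} g & \\ & 1\end{pmatrix} u_v = u_{gv} \begin{pmatrix} g & \\ & 1\end{pmatrix}$, and note that $u_{gv} \in N_{r+1}$ acts on any $W\in\tau\subset\Ind_{N_{r+1}}^{P_{r+1}}(\psi)$ by the character $\psi((gv)_r) = \psi(\eta_r g\cdot v)$. Hence for any $W\in\tau$,
\[
W\left(\begin{pmatrix} g & \\ & 1\end{pmatrix}u_v\right) = \psi(\eta_r g\cdot v)\, W\left(\begin{pmatrix} g & \\ & 1\end{pmatrix}\right).
\]
By Fourier inversion for the non-archimedean Schwartz space $\mathcal{C}_c^\infty(F^r)$, I would choose $\alpha\in\mathcal{C}_c^\infty(F^r)$ satisfying $\phi(x)=\int_{F^r}\alpha(v)\psi(x\cdot v)\,dv$, together with an auxiliary $\phi_1\in\mathcal{C}_c^\infty(F^r)$ that equals $1$ on $\supp\phi$.

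Second, applying Proposition \ref{first step} to $W_0$ and $\phi_1$ yields some $W^{(1)}\in\tau$ for which $W^{(1)}(g,1)=\nu(g)^{1/2}W_0(g)$ whenever $\eta_r g\in\supp\phi$. I would then define
\[
W'(h) = \int_{F^r}\alpha(v)\,W^{(1)}(h u_v)\,dv, \qquad h\in P_{r+1}.
\]
Smoothness of $W^{(1)}$ and compactness of $\supp\alpha$ make this a finite sum of right $U$-translates of $W^{(1)}$, so $W'\in\tau$ (since $\tau$ is a $P_{r+1}$-submodule). Evaluating at $h=\begin{pmatrix} g & \\ & 1\end{pmatrix}$ via the commutation identity gives
\[
W'\begin{pmatrix} g & \\ & 1\end{pmatrix} = \phi(\eta_r g)\cdot W^{(1)}\begin{pmatrix} g & \\ & 1\end{pmatrix},
\]
and the right-hand side equals $\nu(g)^{1/2}W_0(g)\phi(\eta_r g)$, either trivially when $\phi(\eta_r g)=0$, or by the defining property of $W^{(1)}$ when $\phi(\eta_r g)\neq 0$.

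The only step requiring thought is recognizing the Fourier-theoretic recipe for removing the cutoff $\phi_1(\eta_r g)$ from Proposition \ref{first step}: averaging $W^{(1)}$ against $U$ weighted by a Fourier antecedent of $\phi$ converts the character $\psi(\eta_r g\cdot v)$ into the scalar $\phi(\eta_r g)$. Everything else is bookkeeping, and the proof genuinely only uses that $\tau$ is a $P_{r+1}$-submodule, so it goes through in the generality stated.
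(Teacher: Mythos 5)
Your proof is correct and is essentially the paper's own argument: both obtain the identity up to the cutoff from Proposition \ref{first step} and then remove it by averaging over the unipotent $u_v=\left(\begin{smallmatrix} I_r & v\\ & 1\end{smallmatrix}\right)$ against a Fourier antecedent $\alpha$ of $\phi$, using the left $(N_{r+1},\psi)$-equivariance to turn the character $\psi(\eta_r g\cdot v)$ into the scalar $\phi(\eta_r g)$. The only cosmetic difference is your auxiliary cutoff $\phi_1$; the paper applies Proposition \ref{first step} directly with $\phi$ and concludes the same way.
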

\begin{proof}
Take $W$ as in the statement of Proposition \ref{first step}. Let $u(x)=\left(\begin{smallmatrix} I_r & x \\ 0 & 1\end{smallmatrix}\right)$. Let $\alpha$ be the element of $\mathcal{C}_c^\infty(F^r)$ is such that $\widehat{\alpha}=\phi$, then if one sets $W'(p)=\int_{x\in F^n} \alpha(x)W(pu(x))dx$
 for $p\in P_{r+1}$, the map $W'$ belongs to 
$\tau$ and \[W'\begin{pmatrix} g & \\ & 1 \end{pmatrix}=W\begin{pmatrix} g & \\ & 1 \end{pmatrix}\widehat{\alpha}(\eta_r g)\] for $g\in G_r$. The statement now follows from Proposition \ref{first step}.
\end{proof}

We thus obtain the following important result. As we said before, it is obviously a generalization of one part of \cite[Corollary of Proposition 1.7]{CPS}, but it is also a generalization of \cite[Proposition 9.1]{JPSS83} thanks to Proposition \ref{inclusion of whittaker model} and Proposition \ref{kirillov}.

\begin{thm}\label{lethal lemma}
Let $\tau$ be a submodule of $\Ind_{N_n}^{P_n}(\psi)$, with $n\geqslant 2$, and let $1\leqslant k \leqslant n-1$ be
such that $\rho=\tau^{(k)}$ has a central character. Then for any 
$W_0\in W(\rho,\psi)$ and $\phi\in \mathcal{C}_c^\infty (F^{n-k})$, there is 
$W\in \tau$ such that \[W\begin{pmatrix} g & \\ & I_k \end{pmatrix}= \nu(g)^{\frac{k}{2}} W_0(g)\phi(\eta_{n-k} g) \] for all $g\in G_{n-k}$.
\end{thm}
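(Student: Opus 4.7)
The plan is to prove Theorem \ref{lethal lemma} by induction on $k$, with the base case $k=1$ supplied directly by Corollary \ref{second step} (taking $r = n-1$). All the real analytic content, namely the asymptotic behavior of Whittaker functions in the central direction that comes from Proposition \ref{Psi1}, has already been packaged in that corollary, so the remaining work is structural: reduce the $k$-th derivative situation to the $(k-1)$-th derivative situation for a smaller group, keeping track of normalization factors.

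For the inductive step, assume $k \geqslant 2$ and the result is known for all pairs $(n',k')$ with $k' < k$. I would set $\sigma = \Phi^-\tau$; by Proposition \ref{Phi}, $\sigma$ is (canonically identified with) a $P_{n-1}$-submodule of $\Ind_{N_{n-1}}^{P_{n-1}}(\psi)$. The key observation is
\[\sigma^{(k-1)} \;=\; \Psi^-(\Phi^-)^{k-2}\sigma \;=\; \Psi^-(\Phi^-)^{k-1}\tau \;=\; \tau^{(k)} \;=\; \rho,\]
so $\sigma^{(k-1)}$ inherits the central character hypothesis, and moreover $W(\sigma^{(k-1)},\psi) = W(\rho,\psi)$. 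Since $1 \leqslant k-1 \leqslant n-2 = (n-1)-1$, the inductive hypothesis applies and produces $V \in \sigma$ with
\[V\begin{pmatrix} g & \\ & I_{k-1}\end{pmatrix} \;=\; \nu(g)^{(k-1)/2}\, W_0(g)\, \phi(\eta_{n-k} g)\]
for all $g \in G_{n-k}$.

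The final step is to lift $V$ back to $\tau$. By construction $\Phi^-:\tau \twoheadrightarrow \sigma$ is surjective, so pick any $W \in \tau$ with $\Phi^- W = V$. By Proposition \ref{Phi}, this equality of elements of $\Ind_{N_{n-1}}^{P_{n-1}}(\psi)$ means exactly $V(p) = \nu(p)^{-1/2} W(p)$ for all $p \in P_{n-1}$, where we embed $P_{n-1} \hookrightarrow P_n$ via $p \mapsto \mathrm{diag}(p,1)$. Applying this with $p = \mathrm{diag}(g, I_{k-1}) \in P_{n-1}$, for which $\nu(p) = |\det g| = \nu(g)$, yields
\[W\begin{pmatrix} g & \\ & I_k \end{pmatrix} \;=\; \nu(g)^{1/2}\, V\begin{pmatrix} g & \\ & I_{k-1}\end{pmatrix} \;=\; \nu(g)^{k/2}\, W_0(g)\, \phi(\eta_{n-k} g),\]
which is the required identity.

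The only potential obstacle I anticipate is bookkeeping: one must verify that the identifications of $\Phi^-$-images inside the various induced spaces are consistent so that the inductive hypothesis can be applied to $\sigma$, and that the modular factor $\nu^{-1/2}$ picked up from Proposition \ref{Phi} combines correctly with the $\nu^{(k-1)/2}$ produced by induction to give the desired $\nu^{k/2}$. Both checks are routine once the embedding $P_{n-1}\hookrightarrow P_n$ and the restriction of $\nu$ are made explicit, as above.
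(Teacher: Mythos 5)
Your proof is correct and takes essentially the same route as the paper: the paper applies Corollary \ref{second step} directly to $(\Phi^-)^{k-1}\tau$ (a submodule of $\Ind_{N_{n-k+1}}^{P_{n-k+1}}(\psi)$) and then unwinds the $k-1$ applications of Proposition \ref{Phi} all at once, which is precisely what your induction does one step at a time, with the same accumulation of $\nu^{1/2}$ factors into $\nu^{k/2}$.
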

\begin{proof}
According to Corollary \ref{second step}, there is $W'\in (\Phi^-)^{(k-1)}(\tau)$ such that for $g\in G_{n-k}$:
\[W'\begin{pmatrix} g & \\ & 1 \end{pmatrix}=\nu(g)^{\frac{1}{2}} W_0(g)\phi(\eta_{n-k} g).\]
Now, Proposition \ref{Phi} repeated $k-1$ times gives the existence of $W\in \tau$ such that 
\[W\begin{pmatrix} g & \\ & I_{k} \end{pmatrix}= \nu(g)^{\frac{k-1}{2}}W'\begin{pmatrix} g & \\ & 1 \end{pmatrix}\] for $g\in G_{n-k}$, the statement follows.
\end{proof}

\section{Test vectors for $L$-factors of pairs of discrete series}

The aim of the second part of this paper is to show that $L(s,\St_l(\rho),\Sp_k(\rho'))=L(s,\St_l(\rho),\St_k(\rho'))$ is given by a single Rankin-Selberg integral. We first recall their definitions.

\subsection{$L$-factors for pairs of discrete series}\label{section pairs of disc}

All results of this section are fundamental facts from \cite{JPSS83}. We normalize the Haar measure on $G_n$ to give volume $1$ to $K_n$, and on any closed subgroup $H$ of $G_n$ we normalize the Haar measure to give volume $1$ to $H\cap K_n$. If $H$ is unimodular, this then defines a unique nonzero right invariant measure 
on $H\backslash G_n$. We consider $\pi\in \rep_\whitt(n)$ and $\pi'\in \rep_\whitt(m)$ with 
$n\geqslant m\geqslant 1$.

If $n=m$, then for $\phi\in \mathcal{C}_c^\infty(F^n)$, $W\in W(\pi,\psi)$ and 
$W'\in W(\pi',\psi^{-1})$, we define for $s\in \C$ the \textit{Rankin-Selberg integral}:
\[I_n(s,W,W',\phi)=\int_{N_n\backslash G_n} W(g)W'(g)\phi(\eta_n g)\nu(g)^sdg.\]
It is absolutely convergent for $\Re(s)$ larger than a real number depending only on $\pi$ and $\pi'$, and extends to an 
element of $\C(q^{-s})$. Moreover, letting $W,W'$ and $\phi$ vary, the subspace of $\C(q^{-s})$ spanned by the integrals 
$I_n(s,W,W',\phi)$ is equal to $L(s,\pi,\pi')\C[q^{\pm s}]$ for a unique Euler factor $L(s,\pi,\pi')$, which is called 
the \textit{$L$-factor} of $\pi$ and $\pi'$.

If $n>m$, then for $W\in W(\pi,\psi)$ and 
$W'\in W(\pi',\psi^{-1})$, we define for $s\in \C$ the \emph{Rankin-Selberg (or Hecke) integral}:
\[I_{n,m}(s,W,W')=\int_{N_m\backslash G_m} W\begin{pmatrix}g & \\ & I_{n-m} \end{pmatrix}W'(g)\nu(g)^{s-\frac{n-m}{2}}dg.\]
It is absolutely convergent for $\Re(s)$ larger than a real number depending only on $\pi$ and $\pi'$, and extends to an 
element of $\C(q^{-s})$. Moreover, letting $W$ and $W'$ vary, the subspace of $\C(q^{-s})$ spanned by the integrals 
$I_{n,m}(s,W,W')$ is equal to $L(s,\pi,\pi')\C[q^{\pm s}]$ for a unique Euler factor $L(s,\pi,\pi')$, which is again called 
the $L$-factor of $\pi$ and $\pi'$. We set $L(s,\pi',\pi)=L(s,\pi,\pi')$.

By definition, following the authors of \cite{JPSS83} again, if $\pi$ and $\pi'$ belong respectively to 
$\Irr(n)$ and $\Irr(m)$, then the Whittaker models 
$W(\S(\pi),\psi)$ and $W(\S(\pi'),\psi^{-1})$ are uniquely determined by $\pi$ and $\pi'$. We set 
$L(s,\pi,\pi')=L(s,\S(\pi),\S(\pi'))$ where $\S(\pi)$ and $\S(\pi')$ are the standard modules over $\pi$ and $\pi'$. 

As explained in the introduction, we will only be interested in the $L$-factors of the form $L(s,\d,\presuper{t}\d')$ for $\d$ and $\d'$ in $\Irr_{\disc}$. 
We recall from Section \ref{notation} that $\d$ is of the form $\St_l(\rho)$ for $\rho\in \Irr_{\cusp}$, similarly 
$\d'$ is of the form $\St_k(\rho')$ for $\rho'\in \Irr_{\cusp}$, hence $\presuper{t}\delta'=\Sp_k(\rho')$ and $\S(\Sp_k(\rho'))=\S_k(\rho')$. As $\d=\S(\d)$ and $\d'=\S(\d')$, we have \[L(s,\d,\d')=L(s,\St_l(\rho),\St_k(\rho'))\] whereas 
\[L(s,\d,\presuper{t}\d')=L(s,\St_l(\rho),\S_k(\rho')).\] We now notice 
that \[L(s,\d,\presuper{t}\d')=L(s,\d,\d'),\] so our integral will also compute $L(s,\d,\d')$. This observation is a consequence of 
\cite[Proposition 8.1, Theorem 8.2 and Proposition 9.4]{JPSS83}.

\begin{prop}\label{L factor discrete}
For $1\leqslant k\leqslant l$ and $\rho$ and $\rho'$ in $\Irr_{\cusp}$, the $L$ factor $L(s,\St_l(\rho),\St_k(\rho'))$ is equal to $1$ unless there is $s_0\in \C$ such that $\rho'\simeq \nu^{s_0}\rho^\vee$.  In the case $\rho'\simeq \nu^{s_0}\rho^\vee$, one has the equalities
\begin{align*}
L(s,\St_l(\rho),\St_k(\rho'))&=\prod_{i=1}^k L(s+s_0,\nu^{\frac{l-1}{2}}\rho,\nu^{\frac{k+1-2i}{2}}\rho^\vee)\\
&=L(s,\St_l(\rho),\S_k(\rho')) \\~&=L(s,\St_l(\rho),\Sp_k(\rho')).
\end{align*}
\end{prop}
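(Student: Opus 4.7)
The plan is to handle each of the three equalities separately, invoking precisely the references cited by the authors, namely \cite[Proposition 8.1, Theorem 8.2, Proposition 9.4]{JPSS83}.

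The last equality $L(s,\St_l(\rho),\S_k(\rho'))=L(s,\St_l(\rho),\Sp_k(\rho'))$ is purely a matter of definition: by the conventions set up in Section \ref{section pairs of disc}, the $L$-factor of an irreducible representation is defined as the $L$-factor of its standard module, and by construction we have the identification $\S(\Sp_k(\rho'))=\S_k(\rho')$ recorded in Section \ref{notation}. So there is nothing to check here.

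To obtain the middle equality $L(s,\St_l(\rho),\S_k(\rho'))=\prod_{i=1}^{k}L(s+s_0,\nu^{(l-1)/2}\rho,\nu^{(k+1-2i)/2}\rho^\vee)$, I would write $\S_k(\rho')=\nu^{(k-1)/2}\rho'\times\cdots\times\nu^{(1-k)/2}\rho'$ and apply the multiplicativity of Rankin--Selberg $L$-factors under parabolic induction in the second argument \cite[Proposition 8.1]{JPSS83}, obtaining
\[
L(s,\St_l(\rho),\S_k(\rho'))=\prod_{i=1}^{k}L\!\left(s+\tfrac{k+1-2i}{2},\,\St_l(\rho),\,\rho'\right).
\]
Each factor now pairs a discrete series with a cuspidal representation; for these, \cite[Theorem 8.2, Proposition 9.4]{JPSS83} (combined with the Langlands-parameter shape $\phi_{\St_l(\rho)}=\phi_\rho\boxtimes\mathrm{Sym}^{l-1}$) gives the relation $L(s,\St_l(\rho),\rho')=L(s+(l-1)/2,\rho,\rho')$. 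This vanishes (equals $1$) unless $\rho'\simeq\nu^{s_0}\rho^\vee$ for some $s_0\in\C$; in the nontrivial case, substituting $\rho'=\nu^{s_0}\rho^\vee$ and absorbing the $\nu$-twists into the two arguments of the bilinear $L$-factor yields exactly the claimed product.

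Finally, for the first equality $L(s,\St_l(\rho),\St_k(\rho'))=\prod_{i=1}^{k}L(s+s_0,\nu^{(l-1)/2}\rho,\nu^{(k+1-2i)/2}\rho^\vee)$, I would apply \cite[Proposition 9.4]{JPSS83} directly to the pair of discrete series: it asserts triviality of the $L$-factor unless $\rho'\simeq\nu^{s_0}\rho^\vee$ and, in the matching case, expresses the $L$-factor as a product indexed by the common portion of the two cuspidal segments $[\nu^{(1-l)/2}\rho,\dots,\nu^{(l-1)/2}\rho]$ and $[\nu^{(1-k)/2}\rho',\dots,\nu^{(k-1)/2}\rho']$. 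Since $k\leqslant l$, this common portion has length exactly $k$ and covers the entire second segment; a reindexing of the $\nu$-shifts then matches it term-by-term with the product produced by the multiplicativity calculation above. The only step that is not immediate is this last bookkeeping — the main (mild) obstacle being to verify that the combinatorial product given by \cite[Proposition 9.4]{JPSS83} and the product obtained via multiplicativity agree under the reindexing, and to observe that this is exactly where the hypothesis $k\leqslant l$ intervenes.
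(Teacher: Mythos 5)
Your proposal is correct and matches the paper's approach: the paper offers no written proof of this proposition beyond the citation of \cite[Proposition 8.1, Theorem 8.2, Proposition 9.4]{JPSS83}, and your argument is precisely the intended unpacking of those references --- the last equality by definition of the $L$-factor of an irreducible representation via its standard module, the middle one by multiplicativity in the second variable applied to the cuspidally induced standard module $\S_k(\rho')$ together with the cuspidal computation, and the first by the product formula for pairs of discrete series with the reindexing governed by $k\leqslant l$. The only cosmetic remark is that the identity $L(s,\St_l(\rho),\rho')=L(s+\frac{l-1}{2},\rho,\rho')$ is already the $k=1$ case of \cite[Proposition 9.4]{JPSS83}, so the detour through Langlands parameters is not needed (and is anachronistic relative to that reference).
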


\begin{rem} \label{remLtriv}
If $\rho$ and $\rho'$ are not equal up to an unramified twist, then it is not new that the Rankin-Selberg $L$-factor $L(s,\St_l(\rho),\S_k(\rho'))=1$ is given by a single integral: this follows from 
the proof of \cite[Theorem 2.7]{JPSS83}. If they are equal up to unramified twist, it is enough to show the test vector result 
for $\rho'=\rho^\vee$. \end{rem}
Following Remark \ref{remLtriv}, henceforth we can and will assume that $\rho'=\rho^\vee$. We will also suppose that $\psi$ has conductor $0$, i.e. is trivial on $\o$, but not on $\p^{-1}$.

Let $r$ be the integer such that $\rho\in \Irr_{\cusp}(r)$. If $l=k$ we will find $W\in W(\St_l(\rho),\psi)$, $W'\in W(\S_k(\rho^\vee),\psi^{-1})$, and $\phi\in \mathcal{C}_c^\infty(F^n)$ such that $I_{lr}(s,W,W',\phi)=L(s,\St_l(\rho),\St_k(\rho^\vee))$, whereas if $l>k$, we will find $W\in W(\St_l(\rho),\psi)$ 
and $W'\in W(\S_k(\rho^\vee),\psi^{-1})$ such that $I_{lr,kr}(s,W,W')=L(s,\St_l(\rho),\St_k(\rho^\vee))$. Hence it is fair to say that we find test vectors for $L(s,\St_l(\rho),\Sp_k(\rho^\vee))$ rather then $L(s,\St_l(\rho),\St_k(\rho^\vee))$ although the $L$-factors are equal. 

Taking $W'\in W(\S_k(\rho^\vee),\psi^{-1})$ makes things simpler,  as the space $W(\S_k(\rho^\vee),\psi^{-1})$ contains $W(\St_k(\rho^\vee),\psi^{-1})$ as a proper subspace. We believe that it is possible to take $W'\in W(\St_k(\rho^\vee),\psi^{-1})$, but it seems much more difficult to us.  To justify and motivate the fact that allowing to take $W\in W(\S_k(\rho),\psi^{-1})$ simplifies matters, we start with the toy example $\rho=\mathbf{1}$.

\subsection{The case of Steinberg representations}\label{steinberg case}

Here $\rho=\mathbf{1}$ (the trivial character of $G_1$). We set $\S_k=\S_k(\1)$, $\St_k=\St_k(\1)$, and $\Sp_k=\Sp_k(\1)=\mathbf{1}_{G_k}$. For $k=2$, by \cite[Proposition 5.3.7]{KM10}, there are $W\in W(\St_2,\psi)$ and $W'\in W(\St_2,\psi^{-1})$, and $\phi\in \mathcal{C}_c^\infty(F^2)$ such 
 that $I_2(s,W,W',\phi)=L(s,\St_2,\St_2)$, and it is in fact shown in [ibid.] that one can take $W$ and $W'$ to be the essential vectors in $W(\St_2,\psi)$ and $W(\St_2,\psi^{-1})$. The proof of this result is already quite technical.
 Notice that $W(\S_k,\psi^{-1})$ is spherical (contains $K_n$-fixed vectors), whereas the smaller space $W(\St_k,\psi^{-1})$ is not spherical when $k\geqslant 2$. Hence we choose $W_k^0$ to be the normalized spherical vector in $W(\S_k,\psi^{-1})$. 
 On the other hand, in $W(\St_k,\psi)$, we choose the essential vector $W_k^{\ess}$ of \cite{JPSS81} (see also \cite{J12}, \cite{M13} and \cite{M14}). We fix $l\geqslant k \geqslant 1$, and moreover as for 
 $l=k=1$ one has $W_1^{\ess}=W_1^0=\1$, Tate's thesis gives us the equality \[I_1(s,W_1^{\ess}, W_1^0,\1_\o)=L(s,\St_1,\St_1),\] 
 we suppose that $l\geqslant 2$. 

If $l>k$, then Corollary 3.3 of \cite{M13} immediatly gives the existence of test vectors.  

\begin{prop}\label{different steinberg}
If $l>k\geqslant 1$ then \[I_{l,k}(s,W_l^{\ess}, W_k^0)=L(s,\St_l,\Sp_k)=L(s,\St_l,\St_k).\]
\end{prop}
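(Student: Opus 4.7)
The plan is a direct appeal to the extension of the essential-vector test-vector theorem of Jacquet--Piatetski-Shapiro--Shalika established in \cite[Corollary 3.3]{M13}. Recall that the classical characterization of the essential vector $W^{\ess}\in W(\pi,\psi)$ of an irreducible generic representation $\pi$ of $G_l$, due to Jacquet--Piatetski-Shapiro--Shalika \cite{JPSS81}, is that for every spherical Whittaker function $W'\in W(\pi',\psi^{-1})$ of an \emph{unramified irreducible generic} representation $\pi'$ of $G_m$ with $m<l$, one has
\[I_{l,m}(s,W^{\ess},W')=L(s,\pi,\pi').\]
In our situation $\pi=\St_l$ is irreducible generic, but the representation being paired with is $\S_k=\nu^{(k-1)/2}\times \cdots \times \nu^{(1-k)/2}$, which is only a standard module of Whittaker type whose irreducible quotient is $\Sp_k=\pi(\S_k)=\mathbf{1}_{G_k}$ (and for $k\geqslant 2$ it is not itself irreducible generic). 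So the classical statement does not literally apply; however, since $\S_k$ is unramified, $W(\S_k,\psi^{-1})$ does contain the normalized spherical vector $W_k^0$.

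The first (and only) step is then to invoke \cite[Corollary 3.3]{M13}, which extends the above characterization of the essential vector to pairings with spherical vectors in Whittaker-type (not necessarily irreducible) unramified representations of lower rank. Applied with $\pi=\St_l$ and the Whittaker-type representation $\S_k$ of $G_k$ with $k<l$, it gives directly
\[I_{l,k}(s,W_l^{\ess},W_k^0)=L(s,\St_l,\pi(\S_k))=L(s,\St_l,\Sp_k).\]

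To finish, I would combine this with Proposition \ref{L factor discrete}, which in particular asserts $L(s,\St_l,\Sp_k)=L(s,\St_l,\St_k)$, to obtain the full chain of equalities in the statement. There is really no serious obstacle to overcome here: the hard analytic content sits inside \cite[Corollary 3.3]{M13}, which handles exactly the passage from the irreducible generic case of JPSS to the Whittaker-type standard-module case. This subsection plays the role of a toy illustration in which the general phenomenon is cleanly visible: pairing the essential vector of $\St_l$ against the spherical vector of $W(\S_k,\psi^{-1})$ (rather than a vector in the proper subspace $W(\St_k,\psi^{-1})$) gives the $L$-factor on the nose, motivating the more elaborate strategy for the general cuspidal $\rho$ later in the paper.
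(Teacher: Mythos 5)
Your proposal matches the paper's treatment of this case: the paper likewise disposes of $l>k$ by a direct appeal to Corollary 3.3 of \cite{M13}, stating that it ``immediately gives the existence of test vectors,'' and the remaining identification $L(s,\St_l,\Sp_k)=L(s,\St_l,\St_k)$ is exactly Proposition \ref{L factor discrete}. No further argument is given in the paper, so your one-step citation is essentially the same proof.
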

 
We now consider the case $l=k\geqslant 2$. 
Take $\phi$ a Schwartz function on $F^l$ of the form 
\[\phi=\l \mathbf{1}_{(\p^f)^{l-1} \times (1+\p^f)},\]
with $f$ sufficiently large, and $\l=\mu(1+\p^f)^{-1}$ for $d\mu$ the normalized (see the beginning of Section \ref{section pairs of disc}) Haar measure on $F^\times$, then 
\[I_l(s,W_l^{\ess},W_l^0,\phi)= \int_{N_{l-1}\backslash G_{l-1}}W_l^{\ess}\begin{pmatrix} g & \\ & 1 \end{pmatrix}
W_l^0\begin{pmatrix} g & \\ & 1 \end{pmatrix}\nu(g)^{s-1}dg.\]

The integral on the right above (a formula which makes sense only for 
$\Re(s)$ large) is also considered as a rational function of $q^{-s}$.

In \cite[Definition 1.3]{M13}, to each $\pi\in \Irr_{\gen}$, one attaches an unramified standard module $\pi_u$ as follows: 
let $\{\St_{k_i}(\mu_i)\}_{i=1,\dots,b}$ be the multiset of Steinberg representations with $\mu_i$ an unramified character of $F^\times$ occurring as factors of the product of discrete series 
$\pi$ (see Section \ref{notation}) numbered such that 
\[e(\nu^{\frac{k_1-1}{2}}\mu_1)\geq  \dots \geq  e(\nu^{\frac{k_b-1}{2}}\mu_b),\] then 
\[\pi_u:=\nu^{\frac{k_1-1}{2}}\mu_1\times \dots \times \nu^{\frac{k_b-1}{2}}\mu_b.\] 
For $\pi=\St_l$, one gets: $(\St_l)_u=\nu^{\frac{l-1}{2}}$. We write $W_{(\St_l)_u}^0=\nu^{\frac{l-1}{2}}$ for its normalized spherical vector. We recall the formula for $W_l^{\ess}$ given in \cite[Corollary 3.2]{M13}. Take $n\in N_{l-1}$, 
$a=\diag(a_1,\dots,a_{l-1})\in G_{l-1}\subset G_l$, and $k\in K_{l-1}\subset G_{l-1}\subset G_l$:

\[W_l^{\ess}\begin{pmatrix} nak & \\ & 1 \end{pmatrix}=\psi(n)\nu(a_1)^{l-1}\1_\o(a_1)\prod_{i=2}^{l-1}\1_{\o^\times}(a_i).\]

For $H$ a closed subgroup of $G_l$, we denote by $\d_H$ its modulus character which satisfies that if $dh$ is a right invariat Haar measure on $H$, then for any $f\in \mathcal{C}_c^\infty(H)$, and any $x\in H$: $dh (f(x^{-1} \ .))=\d_H(x)dh(f)$. Now we do computations similar to those in the proof of \cite[Corollary 3.3, Case $m=r$]{M13}. 

\begin{align*}
\int_{N_{l-1}\backslash G_{l-1}}&W_l^{\ess}\begin{pmatrix} g & \\ & 1 \end{pmatrix}W_l^0\begin{pmatrix} g & \\ & 1 \end{pmatrix}\nu(g)^{s-1}dg\\
&=\int_{(F^\times)^{l-1}}W_l^0\begin{pmatrix} a & \\ & 1 \end{pmatrix}|a_1|^{s+l-2}\1_\o(a_1)\prod_{i=2}^{l-1}\1_{\o^\times}(a_i)|a_i|^{s-1}\d_{B_{l-1}}^{-1}(a)\prod_{i=1}^{l-1} d_{F^\times}a_i\\
&=\int_{G_1}W_l^0\begin{pmatrix} a_1 & \\ & I_{l-1} \end{pmatrix}|a_1|^{s+l-2}\1_\o(a_1)\d_{B_{l-1}}^{-1}\begin{pmatrix} a_1  & \\ & I_{l-2}\end{pmatrix}d_{F^\times}a_1 \\
&=\int_{G_1}W_l^0\begin{pmatrix} a_1 & \\ & I_{l-1} \end{pmatrix}|a_1|^{s}d_{F^\times}a_1\\ 
&\text{(because }W_l^0(a_1)=W_l^0(a_1)\1_\o(a_1)\text{ by \cite{S76})}\\
&=I_{l,1}(s,W_l^0,\nu^{\frac{l-1}{2}})=\prod_{i=1}^{l} L(s,\nu^{\frac{1}{2}},\nu^{\frac{l+1-2i}{2}})\\&=L(s,\St_l,\Sp_l)=L(s,\St_l,\St_l),\end{align*}
the antepenultimate equality follows from \cite[Proposition 2.3]{JS81I}, and \cite[Section 1, equality (3)]{JS81II} (or more precisely their immediate extensions to standard modules as explained in the discussion of \cite[p. 1201]{M13}), and the two last equalities by Proposition \ref{L factor discrete}. Hence we just proved:

\begin{prop}\label{equal steinberg}
For $l\geqslant 2$, then for $f$ large enough, and a well-chosen multiple $\phi$ of 
$\mathbf{1}_{(\p^f)^{l-1} \times (1+\p^f)}$, we have
\[I_l(s,W_l^{\ess},W_l^0,\phi)= L(s,\St_l,\Sp_l)= L(s,\St_l,\St_l).\]
\end{prop}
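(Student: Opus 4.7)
The plan is to carry out the explicit Rankin--Selberg computation sketched just before the statement. The strategy unfolds in three stages: first, collapse the integral over $N_l\backslash G_l$ onto an integral over $N_{l-1}\backslash G_{l-1}$ using the very particular shape of $\phi$; second, reduce again to a single integral on $F^\times$ via the Iwasawa decomposition of $G_{l-1}$ combined with the explicit formula for the essential vector $W_l^{\ess}$; third, recognise the resulting integral as a Hecke integral computing the desired $L$-factor.

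In the first stage, I take $\phi=\lambda\,\mathbf{1}_{(\p^f)^{l-1}\times(1+\p^f)}$ with $\lambda=\mu(1+\p^f)^{-1}$. For $f$ large enough (depending on the $K_l$-types and central characters of both Whittaker functions), the support condition $\eta_l g\in(\p^f)^{l-1}\times(1+\p^f)$ together with the Iwasawa decomposition $G_l=P_l Z_l K_l$ forces $g$ into a small neighbourhood on which $W_l^{\ess}$ and $W_l^0$ are essentially $K_l$- and $Z_l$-invariant, with the scalar $\lambda$ absorbing precisely the volume of this neighbourhood. The integral $I_l(s,W_l^{\ess},W_l^0,\phi)$ then reduces to
\[\int_{N_{l-1}\backslash G_{l-1}}W_l^{\ess}\begin{pmatrix} g & \\ & 1 \end{pmatrix}W_l^0\begin{pmatrix} g & \\ & 1 \end{pmatrix}\nu(g)^{s-1}\,dg.\]

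In the second stage, the Iwasawa decomposition $g=nak$ in $G_{l-1}$, together with the $K_{l-1}$-invariance of $W_l^0$ on the $(l-1)\times(l-1)$ block and the explicit formula for $W_l^{\ess}$ from \cite[Corollary 3.2]{M13}, forces $a_1\in\o$ and $a_i\in\o^\times$ for $i\geq 2$. The indicator functions $\mathbf{1}_{\o^\times}(a_i)$ combine cleanly with $\d_{B_{l-1}}^{-1}$ to make the integrations in $a_2,\dots,a_{l-1}$ contribute a unit volume, and what remains is
\[\int_{F^\times}W_l^0\begin{pmatrix} a_1 & \\ & I_{l-1}\end{pmatrix}|a_1|^{s}\,d^\times a_1,\]
where the factor $\mathbf{1}_\o(a_1)$ can be dropped because by \cite{S76} the Whittaker function $W_l^0(\diag(a_1,I_{l-1}))$ already vanishes for $a_1\notin\o$.

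In the third stage, I recognise this integral as $I_{l,1}(s,W_l^0,\nu^{(l-1)/2})$, the Hecke integral pairing the normalised spherical Whittaker vector of $\S_l=\S(\St_l)$ with the character $(\St_l)_u=\nu^{(l-1)/2}$. By \cite[Proposition 2.3]{JS81I} and \cite[Section 1, equality (3)]{JS81II}, extended to standard modules as in \cite[p.~1201]{M13}, this evaluates to $L(s,\S_l,\nu^{(l-1)/2})$, and Proposition \ref{L factor discrete} applied with $\rho=\rho'=\mathbf{1}$, $k=l$, $s_0=0$ identifies it with $L(s,\St_l,\Sp_l)=L(s,\St_l,\St_l)$. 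The main obstacle is stage (i): one must track carefully how the $K_l$-invariance levels and central characters of $W_l^{\ess}$ and $W_l^0$ interact with the support of $\phi$, and verify that the normalising constant $\lambda=\mu(1+\p^f)^{-1}$ is precisely the factor needed for the volume contributions to collapse to unity, producing the clean reduction to the integral over $N_{l-1}\backslash G_{l-1}$ with the correct power of $\nu$.
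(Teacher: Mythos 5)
Your proposal follows essentially the same route as the paper's own proof: the same choice of $\phi=\lambda\,\mathbf{1}_{(\p^f)^{l-1}\times(1+\p^f)}$ with $\lambda=\mu(1+\p^f)^{-1}$ collapsing $I_l$ to an integral over $N_{l-1}\backslash G_{l-1}$, the same Iwasawa/essential-vector computation reducing to $\int_{F^\times}W_l^0(\diag(a_1,I_{l-1}))|a_1|^s\,d^\times a_1=I_{l,1}(s,W_l^0,\nu^{(l-1)/2})$, and the same appeal to \cite{JS81I}, \cite{JS81II} and Proposition \ref{L factor discrete} to identify this with $L(s,\St_l,\Sp_l)=L(s,\St_l,\St_l)$. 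The argument is correct.
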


\subsection{The general case}\label{general case}
In this section, for $l\geqslant k\geqslant 1$, we find test vectors for $L(s,\St_l(\rho),\Sp_k(\rho^{\vee}))$ by reducing the problem to the known case of pairs of cuspidal representations (\cite[Theorem 9.1]{KM17}) thanks to Theorem \ref{lethal lemma}.

We fix $\rho\in \Irr_{\cusp}(r)$ for $r\geqslant 1$. We set $n=lr$ and $m=kr$. By \cite[Theorem 4.4 and Lemma 4.5]{BZ77} and \cite[Proposition 9.6]{Z80}, using moreover that representations of $G_r$ 
with different central characters are always in direct sum, we have the formulae 
\begin{align} \label{derivative standard}
\S_k(\rho^\vee)^{((k-1)r)}&\simeq \nu^{\frac{1-k}{2}}\rho^\vee\oplus \cdots \oplus \nu^{\frac{k-1}{2}}\rho^\vee,\\
 \label{derivative steinberg}
\St_k(\rho^\vee)^{((k-1)r)}&\simeq \nu^{\frac{k-1}{2}}\rho^\vee.
\end{align}

Let us first treat separately the easy case $k=1$. Then $\S_1(\rho^\vee)=\St_1(\rho^\vee)=\rho^\vee$. In this case, by \cite[Theorem 9.1]{KM17}, there is 
$W^{\cusp}\in W(\nu^{\frac{l-1}{2}}\rho,\psi)$, $V^{\cusp}\in W(\rho^{\vee},\psi^{-1})$, and $\phi\in \mathcal{C}_c^\infty (F^r)$ such that 
\begin{equation}\label{cuspidal one} I_r(s,W^{\cusp},V^{\cusp},\phi)=L(s,\nu^{\frac{l-1}{2}}\rho,\rho^\vee)=L(s,\St_l(\rho),\rho^\vee)
\end{equation}

Now by Theorem \ref{lethal lemma} applied to $\Res_{P_n}(W(\St_l(\rho),\psi))$ (\cite[Proposition 9.1]{JPSS83} is in fact sufficient here), there is 
$W_l\in W(\St_l(\rho),\psi)$ such that \[W_l\begin{pmatrix} g & \\ & I_{n-r}\end{pmatrix}=\nu(g)^{\frac{n-r}{2}}W^{\cusp}(g)\phi(\eta_r g)\] for all $g\in G_r$. The following proposition follows at once from Equation (\ref{cuspidal one}).

\begin{prop}\label{one cusp}
For $l\geqslant 1$, there is $W_l\in W(\St_l(\rho),\psi)$ and $V^{\cusp}\in W(\rho^\vee,\psi^{-1})$ (as above), such that 
\[I(s,W_l,V^{\cusp})=L(s,\St_l(\rho),\rho^\vee).\]
\end{prop}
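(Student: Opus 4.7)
The plan is to verify the proposition by direct substitution of the defining property of $W_l$ into the Hecke integral, reducing it to the known cuspidal test vector result.

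First I would separate the case $l=1$, where $\St_1(\rho) = \rho$ is itself cuspidal on $G_r$. Here one is essentially just invoking \cite[Theorem 9.1]{KM17} recalled in equation (\ref{cuspidal one}) (with an implicit Schwartz function in the integral $I$, since $n = m = r$). Since $W(\St_1(\rho),\psi) = W(\rho,\psi)$, one can take $W_1 = W^{\cusp}$.

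For $l \geqslant 2$, we have $n = lr > r$, so $I(s, W_l, V^{\cusp}) = I_{n,r}(s, W_l, V^{\cusp})$ is the Hecke-type Rankin--Selberg integral from Section \ref{section pairs of disc}, namely
\[
I_{n,r}(s, W_l, V^{\cusp}) = \int_{N_r \backslash G_r} W_l\begin{pmatrix} g & \\ & I_{n-r}\end{pmatrix} V^{\cusp}(g)\,\nu(g)^{s-\frac{n-r}{2}}\,dg.
\]
Substituting the defining property of $W_l$ provided by Theorem \ref{lethal lemma} (applied to the submodule $\Res_{P_n}(W(\St_l(\rho),\psi))$ of $\Ind_{N_n}^{P_n}(\psi)$, using that its $(n-r)$-th derivative $\St_l(\rho)^{((l-1)r)} \simeq \nu^{\frac{l-1}{2}}\rho$ by (\ref{derivative steinberg}) has a central character), namely
\[
W_l\begin{pmatrix} g & \\ & I_{n-r}\end{pmatrix} = \nu(g)^{\frac{n-r}{2}} W^{\cusp}(g)\phi(\eta_r g),
\]
the two powers of $\nu(g)$ cancel and the integral collapses to
\[
\int_{N_r \backslash G_r} W^{\cusp}(g) V^{\cusp}(g) \phi(\eta_r g)\,\nu(g)^{s}\,dg = I_r(s, W^{\cusp}, V^{\cusp}, \phi).
\]
By equation (\ref{cuspidal one}) (the cuspidal test vector theorem of \cite{KM17}) this equals $L(s, \nu^{\frac{l-1}{2}}\rho, \rho^\vee)$, and by Proposition \ref{L factor discrete} (applied with $k=1$, so the product reduces to a single factor) this is exactly $L(s, \St_l(\rho), \rho^\vee)$.

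There is essentially no obstacle here: once Theorem \ref{lethal lemma} has supplied the Whittaker function $W_l$ with the prescribed restriction on the embedded $G_r$-orbit, and the cuspidal test vector theorem has supplied $W^{\cusp}, V^{\cusp}, \phi$ making the cuspidal integral compute the $L$-factor, the proposition is a one-line substitution together with the $L$-factor identity of Proposition \ref{L factor discrete}. The real content of the result lies entirely in Theorem \ref{lethal lemma} and \cite[Theorem 9.1]{KM17}.
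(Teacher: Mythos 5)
Your proof is correct and follows exactly the paper's route: apply Theorem \ref{lethal lemma} (the paper notes \cite[Proposition 9.1]{JPSS83} already suffices here) to produce $W_l$ restricting to $\nu(g)^{\frac{n-r}{2}}W^{\cusp}(g)\phi(\eta_r g)$ on the embedded $G_r$, then substitute into the Hecke integral so that it collapses to $I_r(s,W^{\cusp},V^{\cusp},\phi)=L(s,\St_l(\rho),\rho^\vee)$ from \cite[Theorem 9.1]{KM17}. The only difference is that you spell out the substitution and the $l=1$ case, which the paper leaves implicit.
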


Hence now we suppose that $k\geqslant 2$ until the end of the paper. According to \cite[Theorem 9.1]{KM17} again, one can find $W^{\cusp}\in W(\nu^{\frac{l-1}{2}}\rho,\psi)$, $V_i^{\cusp}\in W(\nu^{\frac{1-k+2i}{2}}\rho^\vee,\psi^{-1})$ (one can in fact 
take $V_i^{\cusp}=\nu^{\frac{1-k+2i}{2}}V^{\cusp}$ for $V^{\cusp}$ as above) for each $i$ between $0$ and $k-1$, and 
$\phi\in \mathcal{C}_c^\infty (F^r)$ such that 
\begin{equation}\label{cuspidal case}  I_r(s,W^{\cusp},V_i^{\cusp},\phi)=L(s,\nu^{\frac{l-1}{2}}\rho,\nu^{\frac{1-k+2i}{2}}\rho^\vee).\end{equation}

By Theorem \ref{lethal lemma} and Equation (\ref{derivative standard}), 
we deduce that for each $i$ between $0$ and $k-1$, there is $V_i\in W(\S_k(\rho^\vee),\psi^{-1})$ such that 

\[ I_r(s,W^{\cusp},V_i^{\cusp},\phi)=I_{kr,r}(s,V_i,W^{\cusp}),\] 

which together with Equation (\ref{cuspidal case}) yields 

\begin{equation}\label{on the way} I_{kr,r}(s,V_i,W^{\cusp})=L(s,\nu^{\frac{l-1}{2}}\rho,\nu^{\frac{1-k+2i}{2}}\rho^\vee).
\end{equation}

We recall that if $d$ is the cardinality of the group $R(\rho)$ of unramified characters fixing $\rho$, and denoting by $\varpi$ 
a uniformizer of $F$, then $\chi\mapsto \chi(\varpi)$ is an isomorphism between $R(\rho)$ and the group of 
$d$-th roots of unity in $\C^\times$ (notice that \cite[Lemma 6.2.5]{BK93} gives an "arithmetical" description of 
$d$ as $d=r/e$ where $e$ is the ramification index of $\rho$ in the sense of type theory). In particular 
\[\prod_{\chi\in R(\rho)}(1-\chi(\varpi)X)=1-X^d.\] Setting $X=q^{-s}$, we have 
\[L(s,\nu^{\frac{l-1}{2}}\rho,\nu^{\frac{1-k+2i}{2}}\rho^\vee)= L(s+\frac{l-k+2i}{2},\rho,\rho^\vee),\] 
which according to \cite[Proposition 8.1]{JPSS83} and the discussion above is equal to: 
\[\frac{1}{1-q^{\frac{(k-l-2i)d}{2}}X^d}.\] 

But as a function 
of $Y=X^d$, one has 
\[L(s,\St_l(\rho),\St_k(\rho^\vee))=\prod_{0=1}^{k-1} \frac{1}{1-q^{\frac{(k-l-2i)d}{2}}Y},\] and, in particular, it has simple poles. We can thus write its partial fraction decomposition, and 
find (explicit) $\l_i\in \C$ such that
\begin{align}
\notag L(s,\St_l(\rho),\St_k(\rho^\vee))&=\sum_{i=0}^{k-1}\frac{\l_i}{1-q^{\frac{(k-l-2i)d}{2}}Y}= 
\sum_{i=0}^{k-1}\l_iL(s,\nu^{\frac{l-1}{2}}\rho,\nu^{\frac{1-k+2i}{2}}\rho^\vee)\\
\label{key}&=\sum_{i=0}^{k-1} \l_i I_{kr,r}(s, V_i,W^{\cusp})=I_{kr,r}(s,\sum_{i=0}^{k-1} \l_i V_i,W^{\cusp}),\end{align}
the second to last equality thanks to Equation (\ref{on the way}).

Set $n=lr$ and $m=kr$, we are now in position to prove the second and last main result of the paper.

\begin{thm}\label{TV2}
If $l>k\geqslant 2$, there is $W_l$ in $W(\St_l(\rho),\psi)$, and $W'_k\in W(\S_k((\rho^\vee)),\psi^{-1})$, such that 
\[I_{n,m}(s,W_l,W'_k)=L(s,\St_l(\rho),\Sp_k(\rho^\vee))=L(s,\St_l(\rho),\St_k(\rho^\vee)).\] 
If $l=k\geqslant 2$,  there is $W_l$ in $W(\St_l(\rho),\psi)$, $W'_l\in  W(\S_l(\rho^\vee),\psi^{-1})$, and 
$\phi \in \mathcal{C}_c^\infty (F^n)$, such that 
\[I_{n}(s,W_l,W'_l,\phi)=L(s,\St_l(\rho),\Sp_l(\rho^\vee))=L(s,\St_l(\rho),\St_l(\rho^\vee)).\] 
Moreover one can always choose $\phi=\mathbf{1}_{(\p^f)^{n-1} \times (1+\p^f)}$ for $f$ large enough.
\end{thm}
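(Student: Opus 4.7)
The plan is to prove the two cases in turn, the $l=k$ case first and then reducing the $l>k$ case to it via Theorem \ref{lethal lemma}. The overarching idea is to use Theorem \ref{lethal lemma} to lift the cuspidal test vectors of equation (\ref{cuspidal case}) to Whittaker functions of $\St_l(\rho)$.

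For the case $l=k$, set $n=lr$. I apply Theorem \ref{lethal lemma} to $\tau = \Res_{P_n}(W(\St_l(\rho),\psi))$ at derivative level $(l-1)r$; by equation (\ref{derivative steinberg}) applied with $k=l$, the derivative $\St_l(\rho)^{((l-1)r)}\simeq \nu^{(l-1)/2}\rho$ has a central character. This produces $W_l\in W(\St_l(\rho),\psi)$ satisfying
\[
W_l\begin{pmatrix}g & \\ & I_{(l-1)r}\end{pmatrix} = \nu(g)^{(l-1)r/2}\,W^{\cusp}(g)\,\phi'(\eta_r g),\qquad g\in G_r,
\]
where $W^{\cusp}$ is the cuspidal test Whittaker function from (\ref{cuspidal case}) and $\phi'\in\mathcal{C}_c^\infty(F^r)$ is chosen to equal $1$ on the support of the cuspidal Schwartz function $\phi$ of (\ref{cuspidal case}). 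With $V'_l = \sum_i\lambda_i V_i$ from (\ref{key}) and $\phi_0 = \mathbf{1}_{(\p^f)^{n-1}\times (1+\p^f)}$ for $f$ large, an explicit unfolding argument—directly generalising Proposition \ref{equal steinberg}, using the localisation of the support of $\phi_0$ near $\eta_n$ combined with the central characters of $\St_l(\rho)$ and $\S_l(\rho^\vee)$, the Iwasawa decomposition of $G_n$ relative to the mirabolic, and the smoothness of $W_l$ and $V'_l$—yields an identity of the form
\[
I_n(s,W_l,V'_l,\phi_0) = c(f)\cdot I_{n,r}(s,V'_l,W^{\cusp}),
\]
for a nonzero normalising constant $c(f)$ absorbed by rescaling $\phi_0$. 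By equation (\ref{key}), the right-hand side equals $L(s,\St_l(\rho),\St_l(\rho^\vee))$.

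For the case $l>k$, set $n=lr$ and $m=kr$. I apply Theorem \ref{lethal lemma} at derivative level $(l-k)r$; the derivative $\St_l(\rho)^{((l-k)r)}\simeq \nu^{(l-k)/2}\St_k(\rho)$ is a discrete series and so has a central character. For any $W_0\in W(\nu^{(l-k)/2}\St_k(\rho),\psi)$ and $\phi\in\mathcal{C}_c^\infty(F^{kr})$, the theorem produces $W_l\in W(\St_l(\rho),\psi)$ with
\[
W_l\begin{pmatrix}h & \\ & I_{(l-k)r}\end{pmatrix} = \nu(h)^{(l-k)r/2}\,W_0(h)\,\phi(\eta_{kr}h),\qquad h\in G_{kr}.
\]
Substituting into the definition of $I_{n,m}$ immediately gives $I_{n,m}(s,W_l,V'_k) = I_{kr}(s,W_0,V'_k,\phi)$. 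Applying the $l=k$ case (already proved, and whose argument goes through for any pair of Steinbergs with cuspidal supports related by an unramified twist) to the pair $(\nu^{(l-k)/2}\St_k(\rho),\St_k(\rho^\vee))$, I choose $W_0, V'_k, \phi$ so that the right-hand side equals $L(s,\nu^{(l-k)/2}\St_k(\rho),\St_k(\rho^\vee))$, which equals $L(s,\St_l(\rho),\St_k(\rho^\vee))$ by Proposition \ref{L factor discrete}.

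The main obstacle is the unfolding identity $I_n(s,W_l,V'_l,\phi_0) = c(f)\cdot I_{n,r}(s,V'_l,W^{\cusp})$ in the $l=k$ case: this requires carefully collapsing the integration on $N_n\backslash G_n$ down to the $N_r\backslash G_r$-integration that defines $I_{n,r}$, by using the small support of $\phi_0$ to concentrate matters near the mirabolic, then the Iwasawa decomposition together with the central characters to handle the intermediate steps. The Steinberg computation of Section \ref{steinberg case} is the guiding template.
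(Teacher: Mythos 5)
Your reduction of the case $l>k$ to the case $l=k$ is a genuinely different route from the paper's: the paper descends in a single step to the cuspidal level, using $\St_l(\rho)^{((l-1)r)}\simeq\nu^{\frac{l-1}{2}}\rho$, and never passes through the intermediate derivative $\St_l(\rho)^{((l-k)r)}\simeq\nu^{\frac{l-k}{2}}\St_k(\rho)$ (a correct formula, though not recorded in the paper). Your substitution showing $I_{n,m}(s,W_l,V'_k)=I_{m}(s,W_0,V'_k,\phi)$ is immediate from Theorem \ref{lethal lemma} and the definitions, and the identification of $L(s,\nu^{\frac{l-k}{2}}\St_k(\rho),\St_k(\rho^\vee))$ with $L(s,\St_l(\rho),\St_k(\rho^\vee))$ does follow from Proposition \ref{L factor discrete} (modulo the small check that the $l=k$ argument tolerates an unramified twist between the two cuspidal supports). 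So the entire weight of your proof rests on the $l=k$ case.

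And there the argument has a genuine gap. After using the support of $\phi_0=\1_{(\p^f)^{n-1}\times(1+\p^f)}$ to reduce $I_n(s,W_l,V'_l,\phi_0)$ to $\int_{N_{n-1}\backslash G_{n-1}}W_l\left(\begin{smallmatrix}g&\\&1\end{smallmatrix}\right)V'_l\left(\begin{smallmatrix}g&\\&1\end{smallmatrix}\right)\nu(g)^{s-1}\,dg$, you must collapse an integral over $N_{n-1}\backslash G_{n-1}$ down to one over $N_r\backslash G_r$. But Theorem \ref{lethal lemma} only prescribes $W_l$ on the copy of $G_r$ in the upper-left corner; it says nothing about its values on the rest of $G_{n-1}$, for instance on elements $\left(\begin{smallmatrix}g&\\x&h\end{smallmatrix}\right)$ with $x\neq 0$ or $h\neq I$, and these do contribute to the integral. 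The ingredients you list --- localisation near $\eta_n$, the Iwasawa decomposition, central characters, smoothness --- do not control those values, and the Steinberg computation of Section \ref{steinberg case} is not a usable template: it succeeds only because the essential vector $W_l^{\ess}$ has an explicit formula on the whole torus, which is unavailable for general cuspidal support. The paper closes exactly this hole with \cite[Lemma 9.2]{JPSS83}, which produces a \emph{different} $W'\in W(\St_l(\rho),\psi)$ agreeing with $W_l$ on the block-diagonal set but vanishing unless $x$ and $h$ lie in arbitrarily small compact open subgroups of $\mathcal{M}_{m-1-r,r}$ and $B^-_{m-1-r}$, combined with the integration formula (\ref{integration formula}) decomposing $N_{m-1}\backslash G_{m-1}$ over $N_r\backslash G_r$ times those coordinates. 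Without this (or an equivalent support-control statement) your asserted identity $I_n(s,W_l,V'_l,\phi_0)=c(f)\,I_{n,r}(s,V'_l,W^{\cusp})$ is unjustified, so the proof is incomplete.
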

\begin{proof}
Let's first deal with the $l>k$ case. Set $W^\cusp\in W(\nu^{\frac{l-1}{2}}\rho,\psi)$ and $\phi\in \mathcal{C}_c^\infty(F^r)$ as in Equation (\ref{cuspidal one}). By Theorem \ref{lethal lemma}, we can find 
$W\in W(\St_l(\rho),\psi)$ such that 
\begin{equation}\label{lethal again}
W\begin{pmatrix} g & \\ & I_{n-r}\end{pmatrix}=\nu(g)^{\frac{n-r}{2}} W^{\cusp}(g)\phi(\eta_r g)
\end{equation}
for $g\in G_r$. We write $\mathcal{M}_{m-r,r}$ for the additive group of $m-r\times m$ matrices with coefficients in $F$, and $B_{m-r}^-$ for the subgroup of lower 
triangular matrices in $G_{m-r}$. Then by 
\cite[Lemma 9.2]{JPSS83}, there is $W'\in W(\St_l(\rho),\psi)$ such that 
\begin{equation}\label{trick from JPSS}
W'\begin{pmatrix} g &  & \\ x & h & \\ &  & I_{n-m} \end{pmatrix}=W\begin{pmatrix} g & \\ &  & I_{n-r} \end{pmatrix}
\end{equation}
 for $x$ 
and $h$ in open compact subgroups $U_1$and $U_2$ of $\mathcal{M}_{m-r,r}$ and $B_{m-r}^-$ respectively that we can choose as small as we like, and is equal to zero for $x$ or $h$ outside of those subgroups. We choose $U_1$ and $U_2$ small enough such that 
matrices $\left(\begin{smallmatrix} I_r &  \\ x & h \end{smallmatrix}\right)$ for $x\in U_1$ and $h\in U_2$ fix all functions $V_i$ by right translation.
The following integration formula for positive measurable functions $F$ on $N_m\backslash G_m$
\begin{equation}\label{integration formula} \int_{N_m \backslash G_m} F(g) dg = \int_{g\in N_r\backslash G_r, x\in \mathcal{M}_{m-r,r}, h\in B_{m-r}^-} F\begin{pmatrix} g &  \\ x & h \end{pmatrix}\nu(g)^{(r-m)} dx d_rh dg
\end{equation}
is valid for an appropriate right Haar measure $d_rh$ on $B_{m-r}^-$. In combination with Equations (\ref{lethal again}) and (\ref{trick from JPSS}), it gives the equality 
\begin{equation}\label{almost} I_{n,m}(s,W',V_i)=I_{m,r}(s,V_i,W^{\cusp}).
\end{equation}
Hence setting \[V'=\sum_{i=0}^{k-1} \l_i V_i\in W(\S_k(\rho^\vee),\psi^{-1}),\] 
Equations (\ref{key}) and (\ref{almost}) together imply the equality
 \[I_{n,m}(s,W',V')= L(s,\St_l(\rho),\Sp_k(\rho^\vee)).\]
We are thus done in this case. Notice that our choices for $W'$ and $V'$ are highly non-canonical.

It remains to deal with the case $l=k\geqslant 2$ (i.e. $n=m\geqslant 2r$), and thanks to Section \ref{steinberg case}, we suppose that $r\geqslant 2$. We take $V'$ as above, and thanks to 
\cite[Lemma 9.2]{JPSS83} again, we take $W'\in W(\St_l(\rho),\psi)$ such that for $g\in G_r$:
\begin{equation}\label{another trick from JPSS}
W'\begin{pmatrix} g &  & \\ x & h \\ & & I_{n-m} \end{pmatrix}=W\begin{pmatrix} g & \\ & I_{n-r} \end{pmatrix}
\end{equation}
 for $x$ 
and $h$ in open compact subgroups $U_1$ and $U_2$ of respectively $\mathcal{M}_{m-1-r,r}$ and $B_{m-1-r}^-$ and is equal to zero for $x$ or $h$ outside of those subgroups. Again we choose $U_1$ and $U_2$ small enough such that 
matrices $\left(\begin{smallmatrix} I_r &  & \\ x & h & \\ & & 1\end{smallmatrix}\right)$ for $x\in U_1$ and $h\in U_2$ fix all functions $V_i$ by right translation. If $\phi$ is a Schwartz function of the form 
$\l\1_{(\p^f)^{n-1} \times (1+\p^f)}$ for 
$f$ large enough and $\l=\mu(1+\p^f)^{-1}$for $d\mu$ the normalized Haar measure on $F^\times$, we have: 
\[I_m(s,W',V',\phi)=\int_{N_{m-1}\backslash G_{m-1}}W'\begin{pmatrix} g & \\ & 1 \end{pmatrix}V'\begin{pmatrix} g & \\ & 1 \end{pmatrix}\nu(g)^{s-1}dg.\]

The the integration formula (\ref{integration formula}) with $m-1$ instead of $m$ together with our choices of $W'$ and $V'$ gives 
the equality 
 
\[\int_{N_{m-1}\backslash G_{m-1}}W'\begin{pmatrix} g & \\ & 1 \end{pmatrix}V'\begin{pmatrix} g & \\ & 1 \end{pmatrix}\nu(g)^{s-1}dg=I_{m,r}(s,\sum_{i=0}^{k-1} \l_i V_i,W^{\cusp}),\] 
and we conclude again, by appealing to Equality (\ref{key}), that 
\[I_m(s,W',V',\phi)=L(s,\St_l(\rho),\Sp_l(\rho^\vee)).\]
\end{proof}

\bibliographystyle{plain}
\bibliography{TV2}
\end{document}